\newcommand{\bd}{\begin{definition}}                %inizia definizione
\newcommand{\ed}{\end{definition}}                  %fine definizione
\newcommand{\bc}{\begin{corollary}}                 %inizia corollario
\newcommand{\ec}{\end{corollary}}                   %fine corollario
\newcommand{\bl}{\begin{lemma}}                     %inizia lemma
\newcommand{\el}{\end{lemma}}                       %fine lemma
\newcommand{\bp}{\begin{proposition}}            %inizia proposizione
\newcommand{\ep}{\end{proposition}}                %fine proposizione
\newcommand{\bere}{\begin{remark}}                  %inizia osservazione
\newcommand{\ere}{\end{remark}}                     %fine oservazione
\newcommand{\bt}{\begin{theorem}}
\newcommand{\et}{\end{theorem}}
\newcommand{\be}{\begin{equation}}
\newcommand{\ee}{\end{equation}}
\newcommand{\bit}{\begin{itemize}}
\newcommand{\eit}{\end{itemize}}
\newtheorem{theorem}{Theorem}[section]
\newtheorem{corollary}[theorem]{Corollary}
\newtheorem{lemma}[theorem]{Lemma}
\newtheorem{proposition}[theorem]{Proposition}
\theoremstyle{definition}
\newtheorem{definition}[theorem]{Definition}
\theoremstyle{remark}
\newtheorem{remark}[theorem]{Remark}
\journal{Topology and its Applications}
\begin{document}

\begin{frontmatter}

\title{Quasi-pseudo-metrization of topological preordered spaces}

\author{E. Minguzzi}

\address{ Dipartimento di Matematica Applicata ``G. Sansone'',
Universit\`a degli Studi di Firenze, Via S. Marta 3,  I-50139
Firenze, Italy.} \ead{ettore.minguzzi@unifi.it}

%\author{E. Minguzzi\thanks{
%Dipartimento di Matematica Applicata ``G. Sansone'', Universit\`a
%degli Studi di Firenze, Via S. Marta 3,  I-50139 Firenze, Italy.
%E-mail: ettore.minguzzi@unifi.it} }

%\date{}

%\maketitle

\begin{abstract}
%\noindent
We establish that every second countable completely regularly
preordered space $(E,\mathscr{T},\le)$ is quasi-pseudo-metrizable,
in the sense that there is a quasi-pseudo-metric $p$ on $E$ for
which the pseudo-metric $p\vee p^{-1}$ induces $\mathscr{T}$ and the
graph of $\le$ is exactly the set $\{(x,y): p(x,y)=0\}$. In the
ordered case it is proved that these spaces can be characterized as
being order homeomorphic to subspaces of the ordered  Hilbert cube.
The connection with quasi-pseudo-metrization results obtained in
bitopology is clarified. In particular, strictly
quasi-pseudo-metrizable ordered spaces are characterized as being
order homeomorphic to order subspaces of the  ordered  Hilbert cube.
\end{abstract}

\begin{keyword}
 quasi-uniformities \sep completely regularly ordered spaces \sep
 quasi-pseudo-metrics

%% MSC codes here, in the form: \MSC code \sep code
%% or \MSC[2008] code \sep code (2000 is the default)
\MSC 54E15 (primary) \sep 54F05 \sep 54E55 \sep 06F30 (secondary).
\end{keyword}

\end{frontmatter}

\section{Introduction}
A fundamental theorem by Urysohn and Tychonoff establishes that
every second countable regular space ($T_3$-space) is metrizable.
This work aims at generalizing this result for topological spaces
endowed with a preorder $\le$. In this case one would like to prove
the existence of a function $p: E\times E\to [0,+\infty)$ which
encodes both the topology and the preorder, where the latter is
obtained through the condition $x\le y$ iff $p(x,y)=0$.
%from which it should be possible to recover the topology and also
%the preorder in the sense that $x\le y$ iff $p(x,y)=0$.
Clearly, function $p$ cannot be a metric in the usual sense, in fact
we shall need the more general notion of quasi-pseudo-metric.

%, as we shall see, the right concept for $p$ is that of

Topological preordered spaces appear in  various fields, for
instance in the study of dynamical systems \cite{akin93}, general
relativity \cite{minguzzi09c}, microeconomics \cite{bridges95} and
computer science \cite{gierz03}. Quasi-pseudo-metrizable preordered
spaces are among the most well behaved topological preordered
spaces, thus it is important to establish if one can just work with
quasi-pseudo-metrizable preordered spaces in the mentioned
applications. Topological preordered spaces are connected to
bitopological spaces but the latter spaces are less directly
connected with the said applications. This is so because,
generically, a flow on a manifold, a causal order on spacetime, or a
preference of an agent in microeconomics, to make a few examples,
are represented by  a preorder which does not necessarily come from
a nicely behaved bitopological space. The category of topological
preordered spaces is in this respect more interesting, and so far
much less investigated, than that of bitopological spaces.

Some definitions will help us to make our mathematical  problem more
precise. A {\em topological preordered space} is a triple
$(E,\mathscr{T},\le)$ where $(E,\mathscr{T})$ is a topological space
and $\le$ is a preorder, namely a reflexive and transitive relation.
Our terminology for topological preordered spaces will follow
Nachbin \cite{nachbin65}. Two topological preordered spaces
$E_1,E_2,$ are {\em preorder homeomorphic}  if there is an
homeomorphism $\varphi: E_1\to E_2$ such that $x\le y$ if and only
if $\varphi(x)\le \varphi(y)$. A subset $S\subset E$ of a
topological preordered space $E$ is a {\em subspace} once it is
endowed with the induced topology $\mathscr{T}_S$ and preorder
$\le_S$ defined by:  for $x,y \in S$, $x \le_S y$ if $x\le y$. The
topological preordered space $E_1$ is preorder embedded in $E_2$ if
it is preorder homeomorphic with a subspace of $E_2$.

A  preorder is an {\em order} if it is antisymmetric. With
\mbox{$i(x)=\{y: x\le y\}$} and $d(x)=\{y: y\le x\}$ we denote the
increasing and decreasing hulls. The topological preordered space is
{\em semiclosed preordered} if $i(x)$ and $d(x)$ are closed for
every $x \in E$, and it is {\em closed preordered} if the graph of
the preorder
\[
G(\le)=\{(x,y): x\le y\},
\]
is closed. A subset $S\subset E$, is called {\em increasing or
upper} if $i(S)=S$ and {\em decreasing or lower} if $d(S)=S$.  It is
called {\em monotone} if it is increasing or decreasing.
%With $I(S)$
%we denote the smallest closed increasing set containing $S$ and with
%$D(S)$ we denote the smallest closed decreasing set containing $S$.
A subset $C$ is convex if it is the intersection of a decreasing and
an increasing set in which case it follows $C=d(C)\cap i(C)$. In
this work it is understood that the set inclusion is reflexive,
$S\subset S$.

A topological preordered space  is a {\em normally preordered space}
if it is semiclosed preordered and for every closed decreasing set
$A$ and closed increasing set $B$ which are disjoint, $A\cap
B=\emptyset$, it is possible to find an open decreasing set $U$ and
an open increasing set $V$ which separate them, namely $A\subset U$,
$B\subset V$, and $U\cap V=\emptyset$.

A topological preordered space  is a {\em regularly preordered
space} if it is semiclosed preordered, (a) for every closed
decreasing set $A$ and closed increasing set $B$ of the form
$B=i(x)$ which are disjoint, $A\cap B=\emptyset$, it is possible to
find an open decreasing set $U$ and an open increasing set $V$ which
separate them, namely $A\subset U$, $B\subset V$, and $U\cap
V=\emptyset$, and (b) for every closed decreasing set $A$ of the
form $A=d(x)$ and closed increasing set $B$  which are disjoint,
$A\cap B=\emptyset$, it is possible to find an open decreasing set
$U$ and an open increasing set $V$ which separate them, namely
$A\subset U$, $B\subset V$, and $U\cap V=\emptyset$.

 An {\em isotone} function is a function $f: E\to \mathbb{R}$
such that $x\le y \Rightarrow f(x)\le f(y)$.
%An {\em utility} is an
%isotone function such that ``$x < y \Rightarrow f(x)<f(y)$.

In a normally preordered space if $A$ is closed decreasing, $B$ is
closed increasing  and $A\cap B=\emptyset$, there is some continuous
isotone function $f:E\to [0,1]$, such that  $A\subset f^{-1}(0)$ and
$B\subset f^{-1}(1)$ (this is the preorder analog of Urysohn
separation lemma, see \cite[Theor. 1]{nachbin65}). Normally
preordered spaces are closed preordered spaces, and closed
preordered spaces are semiclosed preordered spaces.

A topological preordered space is {\em convex} at $x\in E$, if for
every open  neighborhood $O\ni x$, there are an open decreasing set
$U$ and an open increasing  set $V$ such that $x\in U\cap V\subset
O$. A topological preordered space $E$ is {\em convex}  if it is
convex at every point \cite{nachbin65,kent85,kunzi04}. Notice that
according to this terminology the statement ``the topological
preordered space $E$ is convex'' differs from the statement ``the
subset $E$ is convex'' (which is always true). The terminology is
not uniform in the literature, for instance Lawson \cite{lawson91}
calls {\em strongly order convexity} what we call convexity.

%The topological preordered space $E$ is {\em locally convex} if for
%every point $x\in E$, the set of convex neighborhoods of $x$ is a
%base for the neighborhoods system of $x$
%\cite{nachbin65,kent85,kunzi04}. Clearly, convexity implies local
%convexity.

 A quasi-uniformity \cite{nachbin65,fletcher82}
is a pair $(X,\mathcal{U})$ such that $\mathcal{U}$ is a filter on
$X\times X$, whose elements contain the diagonal $\Delta=\{(x,y):
x=y\}$, and such that if $V\in \mathcal{U}$ then there is $W\subset
\mathcal{U}$, such that $W\circ W\subset V$. A quasi-uniformity is a
uniformity if $V\in \mathcal{U}$ implies $V^{-1} \in \mathcal{U}$.
To any quasi-uniformity $\mathcal{U}$ corresponds a dual
quasi-uniformity $\mathcal{U}^{-1}=\{U: U^{-1}\in \mathcal{U}\}$.

From a quasi-uniformity $\mathcal{U}$ it is possible to construct a
topology $\mathscr{T}(\mathcal{U})$ in such a way that a base for
the filter of neighborhoods at $x$ is given by the sets of the form
$U(x)$ where $U(x)=\{y: (x,y)\in U\}$ with $U \in \mathcal{U}$. In
other words $O\in \mathscr{T}(\mathcal{U})$ if for every $x\in O$
there is $U\in \mathcal{U}$ such that $ U(x) \subset O$.

Given a quasi-uniformity $\mathcal{U}$ the family $\mathcal{U}^*$
given by the sets of the form $V\cap W^{-1}$, $V,W\in\mathcal{U}$,
is the coarsest uniformity containing $\mathcal{U}$. The symmetric
topology of the quasi-uniformity is $\mathscr{T}(\mathcal{U}^{*})$.
Moreover, the intersection $\bigcap \mathcal{U}$ is the graph of a
preorder on $X$ (see \cite{nachbin65}), thus given a
quasi-uniformity one naturally obtains a topological preordered
space $(X,\mathscr{T}(\mathcal{U}^{*}),\bigcap \mathcal{U})$. The
topology $\mathscr{T}(\mathcal{U}^{*})$ is Hausdorff if and only if
the preorder $\bigcap \mathcal{U}$ is an order \cite{nachbin65}.

A {\em completely regularly preordered} space (Tychonoff-preordered
space), is a topological preordered space for which  the following
two conditions hold:
\begin{itemize}
\item[(i)] $\mathscr{T}$ coincides with the initial topology generated
by the set of continuous isotone functions $g:E\to [0,1]$,
\item[(ii)] $x\le y$ if and only if for every continuous isotone
function $f:E\to [0,1]$, $f(x)\le f(y)$.
\end{itemize}
Convex normally preordered spaces are completely regularly
preordered spaces, and completely regularly preordered spaces are
convex closed preordered spaces \cite{nachbin65}. Nachbin  proves
\cite[Prop. 8]{nachbin65} that a topological preordered space
$(E,\mathscr{T},\le)$ comes from a quasi-uniformity $\mathcal{U}$,
in the sense that $\mathscr{T}=\mathscr{T}(\mathcal{U}^{*})$ and
$G(\le)=\bigcap \mathcal{U}$ if and only if it is a completely
regularly preordered space,  and proves that every Hausdforff
quasi-uniformizable space admits a closed order compactification
(the Nachbin compactification).

For the discrete preorder $G(\le)=\Delta$, the definitions of
normally preordered space, completely regularly preordered space,
regularly preordered space, closed preordered space, reduce
respectively to normal space, completely regular space, regular
space ($T_3$-space), Hausdorff space.

A {\em bitopological space} is a triple
$(E,\mathscr{P},\mathscr{Q})$ where $E$ is a set and
$\mathscr{P},\mathscr{Q}$, are two topologies on $E$. It is possible
to associate to every topological preordered space a bitopological
space by taking as $\mathscr{P}$ the topology made of all the upper
sets $\mathscr{T}^{\sharp}$, and as $\mathscr{Q}$ the topology made
of all the lower sets $\mathscr{T}^{\flat}$.  Bitopological spaces
were introduced by Kelly \cite{kelly63} and subsequently
investigated in \cite{patty67,lane67}.

A {\em quasi-pseudo-metric} \cite{kelly63,patty67}  on a set $X$ is
a function $p:X\times X \to [0,+\infty)$ such that for $x,y,z\in X$
\begin{itemize}
\item[(i)] $p(x,x)= 0$,
\item[(ii)] $p(x,z)\le p(x,y)+p(y,z)$.
\end{itemize}
The quasi-pseudo-metric is called {\em quasi-metric} \cite{wilson31}
if (i) is replaced with (i'): $p(x,y)= 0$ iff $x=y$. Other
variations exist in the literature. For instance, if (i) is replaced
by (i'') $p(x,y)=p(y,x)=0$ iff $x=y$, we get what is sometimes
referred to as {\em Albert's quasi-metric} \cite{albert41}.

The quasi-pseudo-metric is called {\em pseudo-metric} if
$p(x,y)=p(y,x)$. If a quasi-metric is such that $p(x,y)=p(y,x)$ then
it is a {\em metric} in the usual sense. Sometimes
quasi-pseudo-metrics are called {\em semi-metrics} \cite{nachbin65}
but for other authors semi-metrics are quite different objects
\cite{willard70}. If $p$ is a quasi-pseudo-metric then $q$, defined
by
\[q(x,y)=p(y,x),\] is a quasi-pseudo-metric called {\em conjugate} of
$p$. This structure, called {\em quasi-pseudo-metric space}, is
denoted $(X,p,q)$ and we might equivalently use the notation
$p^{-1}$ for $q$.

From a quasi-pseudo-metric space $(E,p,q)$ we can construct a
quasi-uniformity $\mathcal{U}$ and the associated topological
preordered space $(E, \mathscr{T}(\mathcal{U}^{*}),\bigcap
\mathcal{U})$ following Nachbin \cite{nachbin65}, or a bitopological
space $(X,\mathscr{P},\mathscr{Q})$ following Kelly \cite{kelly63}.

Nachbin defines the quasi-uniformity $\mathcal{U}$ as the filter
generated by the countable base
\begin{equation} \label{vaz}
W_n=\{(x,y)\in X\times X: p(x,y)<1/n\}.
\end{equation}
thus the graph of the preorder is $G(\le)=\bigcap
\mathcal{U}=\{(x,y): p(x,y)=0\}$ and the topology
$\mathscr{T}(\mathcal{U}^{*})$ is that of the
pseudo-metric
%\footnote{There is nothing special in the choice $p+q$,
%we could equivalently take $\sqrt{p^2+q^2}$ or $\max(p,q)$.}
$p+q$. In particular this topology is Hausdorff if and only if $p+q$
is a metric i.e.\ $p(x,y)+p(y,x)=0 \Rightarrow x=y$, which is the
case if and only if the preorder $\le$ is an order. Clearly, every
topological preordered space obtained in this way is a completely
regularly preordered space as it comes from a quasi-uniformity.

\begin{remark}
Clearly, the pseudo-metrics $p\vee q$, $p+q$, $(p^2+q^2)^{1/2}$,
induce the same topology. Nevertheless, we shall preferably use
$p+q$ because in the proof of theorem \ref{mgy} we shall take
advantage of the linearity of this expression. The choice $d = p +
q$ has also been made by Kelly \cite[p. 87]{kelly63}.
\end{remark}

Given a quasi-pseudo-metric $p$ we shall denote $P(x,r)=\{y: p(x,y)<
r\}$ the $p$-ball of radius $r$ centered at $x$, and analogously
$Q(x,r)=\{y: q(x,y)<r\}$ will be the $q$-ball for the conjugate
metric $q$. If $d=p+q$, the $d$-balls will be denoted $D(x,r)=\{y:
d(x,y)< r\}$.

From a quasi-pseudo-metric space $(E,p,q)$  Kelly constructs a
bitopological space $(X,\mathscr{P},\mathscr{Q})$ as follows:
$\mathscr{P}$ is the topology having as base the sets of the form
$P(x,r)$ for arbitrary $x \in X$ and $r> 0$. Analogously,
$\mathscr{Q}$ is the topology having as base the sets of the form
$Q(x,r)$ for arbitrary $x \in X$ and $r> 0$.

\begin{remark} \label{voy}
A base of open neighborhoods at $z \in X$ is given by the sets of
the form $P(z,\epsilon)$, $\epsilon> 0$. Indeed, if $z \in \{y:
p(x,y)<r\}$, that is $p(x,z)<r$, then there is $\epsilon$ such that
$\{w:p(z,w)<\epsilon\} \subset \{y: p(x,y)<r\}$. This follows from
$p(x,w)\le p(x,z)+p(z,w)$, as choosing $\epsilon=r-p(x,z)$, we get
$p(x,w)<r$.
\end{remark}

\section{Quasi-pseudo-metrizability and preorders}

We give and motivate the following definitions.

\begin{definition}$\empty$ \\
A topological preodered space $(E,\mathscr{T},\le)$ is {\em
quasi-pseudo-metrizable} if  there is a pair of conjugate
quasi-pseudo-metrics $p,q$, said {\em admissible}, such that
$\mathscr{T}$ is the topology generated by the pseudo-metric $p+q$,
and the graph of the preorder is given by $G(\le)=\{(x,y):
p(x,y)=0\}$.

A topological preodered space $(E,\mathscr{T},\le)$ is {\em strictly
quasi-pseudo-metrizable} if it is convex semiclosed preordered and
there is a pair of conjugate quasi-pseudo-metrics $p,q$, such that
the topology associated to $p$ is the upper topology
$\mathscr{T}^\sharp$, and the topology associated to $q$ is the
lower topology $\mathscr{T}^\flat$.

A (strictly) quasi-pseudo-metrizable preoreded space is a {\em
(strictly) quasi-pseudo-metrized preordered space} if a choice of
conjugate metrics complying with the previous requirement is made.
%Such spaces are denoted $(E,\mathscr{T},p,q)$.
\end{definition}

It must be noted that we call {\em strictly quasi-pseudo-metrizable
space} what, taking as reference the literature on bitopological
spaces, one would simply call {\em quasi-pseudo-metrizable space}.
The point is that in the topological preordered space version of a
topological property one has usually two or more possibilities and
the stronger can often be interpreted as the  bitopological version
of the property. For instance, Lawson \cite{lawson91} defines  the
{\em strictly completely regularly ordered spaces} which admit the
bitopological interpretation of {\em pairwise complete regularity}
\cite{kunzi90}, in contrast with Nachbin's completely regularly
ordered spaces which do not admit a bitopological interpretation.

\begin{proposition} \label{hwx}
Let $(E,\mathscr{T},\le)$ be quasi-pseudo-metrizable preordered
space and let $p,q$ be a pair of admissible conjugate quasi-pseudo
metrics. The function $p: E\times E \to \mathbb{R}$ is continuous in
the product topology $\mathscr{T}\times \mathscr{T}$ on $E$.
Moreover, it is Lipschitz with respect to $d=p+q$, in the sense that
\begin{equation} \label{nvo}
\vert p(x,y)-p(w,z)\vert\le d(x,w)+d(y,z).
\end{equation}
(This inequality holds also for $d$ replaced by $p\vee q$ or
$(p^2+q^2)^{1/2}$.) For a fixed $x\in E$, the function $q(x,\cdot)$
is isotone and the function $p(x,\cdot)=q(\cdot,x)$ is anti-isotone.
\end{proposition}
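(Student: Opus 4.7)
The plan is to derive every part of the proposition from the triangle inequality for $p$ combined with the defining property $p(x,y)=0$ iff $x\le y$ built into the admissibility of $(p,q)$. The Lipschitz inequality (\ref{nvo}) is the core assertion, and everything else will follow from it or from a one-line triangle estimate.

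First I would prove (\ref{nvo}) by applying the triangle inequality twice: $p(x,y)\le p(x,w)+p(w,z)+p(z,y)$ yields $p(x,y)-p(w,z)\le p(x,w)+q(y,z)$, since $q(y,z)=p(z,y)$. The symmetric chain $p(w,z)\le p(w,x)+p(x,y)+p(y,z)$ gives $p(w,z)-p(x,y)\le q(x,w)+p(y,z)$. Using $p,q\le d$ on each summand and taking the maximum of the two bounds produces (\ref{nvo}). Each of the two sharper bounds involves only summands of the form $p(\cdot,\cdot)$ or $q(\cdot,\cdot)$, so replacing $d$ by any pseudo-metric $\rho$ with $p\le \rho$ and $q\le \rho$ pointwise preserves the inequality; both $p\vee q$ and $(p^2+q^2)^{1/2}$ dominate $p$ and $q$ pointwise, which gives the parenthetical claim. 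To deduce continuity, I would note that $\mathscr{T}$ is by definition the topology of the pseudo-metric $d$, so $\mathscr{T}\times\mathscr{T}$ is induced by $D\bigl((x,y),(w,z)\bigr):=d(x,w)+d(y,z)$; by (\ref{nvo}) the map $p$ is $1$-Lipschitz with respect to $D$, hence continuous.

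For the monotonicity statements, I would suppose $y\le z$, so that $p(y,z)=0$ by admissibility, and obtain $p(y,x)\le p(y,z)+p(z,x)=p(z,x)$, i.e.\ $q(x,y)\le q(x,z)$, proving $q(x,\cdot)$ is isotone. Dually $p(x,z)\le p(x,y)+p(y,z)=p(x,y)$, so $p(x,\cdot)$ is anti-isotone; the identity $p(x,\cdot)=q(\cdot,x)$ is immediate from the definition of the conjugate. There is no real obstacle: the only care needed is to pair the two triangle chains so that the resulting summands combine into the symmetric quantity $d$.
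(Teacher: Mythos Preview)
Your proof is correct and follows essentially the same approach as the paper: both apply the triangle inequality twice to obtain the two one-sided estimates, bound the summands by $d$ to get (\ref{nvo}), deduce continuity from the fact that $d$ induces $\mathscr{T}$, and then derive the monotonicity claims from the single triangle step $p(x,z)\le p(x,y)+p(y,z)$ together with $p(y,z)=0$ when $y\le z$. Your discussion of the parenthetical remark (that $p\vee q$ and $(p^2+q^2)^{1/2}$ dominate $p$ and $q$ pointwise) is a welcome clarification the paper leaves implicit.
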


\begin{proof}
The repeated application of the triangle inequality gives
\begin{align*}
p(x,y)&\le p(x,w)+p(w,z)+p(z,y)\le d(x,w)+p(w,z)+d(y,z),\\
p(w,z)&\le p(w,x)+p(x,y)+p(y,z)\le d(x,w)+p(x,y)+d(y,z),
\end{align*}
thus $\vert p(x,y)-p(w,z)\vert\le d(x,w)+d(y,z)$. By assumption, $d$
generates $\mathscr{T}$ thus $p$ is continuous in the product
topology $\mathscr{T}\times \mathscr{T}$.

If $y\le z$ then $p(y,z)=q(z,y)=0$ and $q(x,y)\le
q(x,z)+q(z,y)=q(x,z)$ namely $q(x,\cdot)$ is isotone. If $y\le z$
then $p(y,z)=0$ and $p(x,z)\le p(x,y)+p(y,z)=p(x,y)$ namely
$p(x,\cdot)$ is anti-isotone.
\end{proof}

\begin{proposition}
Every strictly quasi-pseudo-metrizable preordered space is a
quasi-pseudo-metrizable preordered space. Every
quasi-pseudo-metrizable preordered space is a completely regularly
preordered space.
\end{proposition}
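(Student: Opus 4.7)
The proof splits into two independent claims. The second one (every quasi-pseudo-metrizable preordered space is completely regularly preordered) is almost immediate from the material recalled in the introduction: given admissible conjugates $p,q$, form Nachbin's quasi-uniformity $\mathcal{U}$ generated by the countable base $W_n=\{(x,y):p(x,y)<1/n\}$. One checks $\bigcap \mathcal{U}=\{(x,y):p(x,y)=0\}=G(\le)$ by admissibility, and that the symmetric topology $\mathscr{T}(\mathcal{U}^{*})$ has the sets $\{y:p(x,y)<1/n \text{ and } q(x,y)<1/n\}$ as a neighborhood base at $x$, which is also a base for the $(p+q)$-topology $\mathscr{T}$. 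Thus $(E,\mathscr{T},\le)$ arises from a quasi-uniformity in Nachbin's sense, and his Proposition~8 (recalled above) applies.

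For the first claim, I assume $(E,\mathscr{T},\le)$ is strictly quasi-pseudo-metrizable via $p,q$, and I must verify both $\mathscr{T}(p+q)=\mathscr{T}$ and $\{(x,y):p(x,y)=0\}=G(\le)$. The equality $\mathscr{T}(p+q)=\mathscr{T}(p)\vee\mathscr{T}(q)$ is routine, since any $(p+q)$-ball of radius $r$ contains $P(x,r/2)\cap Q(x,r/2)$ and is contained in $P(x,r)\cap Q(x,r)$. By hypothesis $\mathscr{T}(p)=\mathscr{T}^{\sharp}$ and $\mathscr{T}(q)=\mathscr{T}^{\flat}$, so $\mathscr{T}(p+q)=\mathscr{T}^{\sharp}\vee\mathscr{T}^{\flat}$. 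Convexity of the topological preordered space says exactly that the sets $U\cap V$ with $U\in\mathscr{T}^{\flat}$, $V\in\mathscr{T}^{\sharp}$ form a neighborhood base for $\mathscr{T}$ at every point, whence $\mathscr{T}=\mathscr{T}^{\sharp}\vee\mathscr{T}^{\flat}$ and the topology part is settled.

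For the preorder, the inclusion $G(\le)\subset\{(x,y):p(x,y)=0\}$ is straightforward: each $p$-ball $P(x,r)$, being open in $\mathscr{T}^{\sharp}$, is an open upper set containing $x$, hence contains $i(x)$; so $x\le y$ forces $y\in P(x,r)$ for every $r>0$, giving $p(x,y)=0$. The reverse inclusion is where I expect the main obstacle, and it is precisely here that the semiclosedness hypothesis is indispensable. Suppose $p(x,y)=0$ but $x\not\le y$. By semiclosedness, $i(x)$ is $\mathscr{T}$-closed; its complement is open and, by transitivity of $\le$, a lower set, so $E\setminus i(x)\in\mathscr{T}^{\flat}=\mathscr{T}(q)$. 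Since $y\in E\setminus i(x)$, some $q$-ball $Q(y,\epsilon)$ misses $x$, giving $q(y,x)\ge\epsilon$, equivalently $p(x,y)\ge\epsilon>0$, a contradiction. This completes the plan.
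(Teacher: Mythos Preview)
Your proof is correct and follows essentially the same route as the paper's: both establish $\mathscr{T}(p+q)=\mathscr{T}^{\sharp}\vee\mathscr{T}^{\flat}$ (the paper via the Lipschitz inequality of Proposition~\ref{hwx}, you via the equivalent direct ball containments $P(x,r/2)\cap Q(x,r/2)\subset D(x,r)\subset P(x,r)\cap Q(x,r)$), then invoke convexity to identify this with $\mathscr{T}$; and both use semiclosedness to recover the preorder from $p$, the paper by noting $i(x)=cl_{\mathscr{T}^{\flat}}\{x\}$ and characterizing this closure via $q$-balls, you by the equivalent contrapositive argument through $E\setminus i(x)\in\mathscr{T}^{\flat}$. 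The second claim is handled identically, by exhibiting Nachbin's quasi-uniformity.
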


\begin{proof}
Equation (\ref{nvo}) can be obtained as in the proof of Prop
\ref{hwx} and written $\vert p(x,y)-p(w,z)\vert\le
p(x,w)+q(x,w)+p(y,z)+q(y,z)$, from which it follows that $p$ is
continuous in the product topology
$\sup(\mathscr{T}^\sharp,\mathscr{T}^\flat) \times
\sup(\mathscr{T}^\sharp,\mathscr{T}^\flat)$ (and analogously for
$q$). Thus $p+q$ is
$\sup(\mathscr{T}^\sharp,\mathscr{T}^\flat)\times
\sup(\mathscr{T}^\sharp,\mathscr{T}^\flat)$-continuous, which
implies that the topology $\mathscr{D}$ of the pseudo-metric $d=p+q$
is coarser than $\sup(\mathscr{T}^\sharp,\mathscr{T}^\flat)$.
However, since $p,q\le d$ the  $p$-balls and $q$-balls centered at a
point are $\mathscr{D}$-neighborhoods of the point, thus by remark
\ref{voy}, $\sup(\mathscr{T}^\sharp,\mathscr{T}^\flat)$ is coarser
than $\mathscr{D}$, thus
$\mathscr{D}=\sup(\mathscr{T}^\sharp,\mathscr{T}^\flat)$. Clearly,
$\sup(\mathscr{T}^\sharp,\mathscr{T}^\flat)$ is coarser than
$\mathscr{T}$, but since $E$ is convex,
$\sup(\mathscr{T}^\sharp,\mathscr{T}^\flat)=\mathscr{T}$ which
implies $\mathscr{D}=\mathscr{T}$.

Since $(E,\mathscr{T},\le)$ is semiclosed preordered, $i(x)$ is
closed thus $i(x)=cl_{\mathscr{T}^\flat} x$. It follows that $y\in
i(x)$ iff every $q$-ball centered at $y$ includes $x$ which is
equivalent to ``for all $n\ge 1$, $x\in \{ w: q(y,w)<1/n\}$,'' which
in turn is equivalent to $p(x,y)=0$. We conclude that $y\in i(x)$
iff $p(x,y)=0$.

For the last statement, every quasi-pseudo-metrizable topological
preordered space comes from a quasi-uniformity and hence is a
completely regularly preordered space.
\end{proof}

The problem of quasi-pseudo-metrization of a bitopological space was
considered already in Kelly's work \cite{kelly63} and has been
extensively studied over the years
\cite{salbany72,parrek80,romaguera83,romaguera84,raghavan88,romaguera90,andrikopoulos07,marin09}.
As we shall see in a moment, the solution to this problem can be
used to infer results on the quasi-pseudo-metrizability of a
topological preordered space. The quasi-pseudo-metrizability of a
{\em topological} space has also been investigated
\cite{stoltenberg67,sion67,norman67,kunzi83,kopperman93} but it is
less interesting for our purposes because just one topology cannot
bring information on a non trivial preorder.

For bitopological spaces Kelly \cite[Theor. 2.8]{kelly63} obtained a
generalization of Urysohn's metrization theorem which in our
topological preordered space framework reads as follows

\begin{theorem} (Kelly)
Let $(E,\mathscr{T},\le)$ be a convex regularly preordered space and
assume that both $\mathscr{T}^\sharp$ and $\mathscr{T}^\flat$ are
second countable, then $(E,\mathscr{T},\le)$ is strictly
quasi-pseudo-metrizable.
\end{theorem}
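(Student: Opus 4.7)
The plan is to adapt Kelly's bitopological Urysohn--Tychonoff strategy. Fix countable bases $\mathscr{B}^\sharp=\{V_n\}$ of $\mathscr{T}^\sharp$ and $\mathscr{B}^\flat=\{U_m\}$ of $\mathscr{T}^\flat$. The target is a countable family $\{f_i\}$ of continuous isotone functions and a countable family $\{g_j\}$ of continuous anti-isotone functions from $E$ to $[0,1]$, together with
\[
p(x,y)=\sum_i 2^{-i}\max(0,f_i(x)-f_i(y))+\sum_j 2^{-j}\max(0,g_j(y)-g_j(x)),
\]
and $q=p^{-1}$. The $f$-sum will be engineered to force the $p$-topology to coincide with $\mathscr{T}^\sharp$, while the $g$-sum will do the same for the $q$-topology and $\mathscr{T}^\flat$.

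The first step is to promote regularity to normal preordering. Given disjoint closed decreasing $A$ and closed increasing $B$ and a point $x\in A$, the sets $d(x)$ and $B$ are disjoint (otherwise some $y\le x$ would lie in $B$, forcing $x\in B\cap A$), so condition (b) separates them by an open decreasing $W_x\supset d(x)$ and open increasing $W_x'\supset B$. A basic $U_{n_x}\in\mathscr{B}^\flat$ with $x\in U_{n_x}\subset W_x$ then satisfies $\mathrm{cl}^\sharp(U_{n_x})\subset E\setminus W_x'$, which is disjoint from $B$, and second countability extracts a countable subfamily $\{U_{n_k}\}$ covering $A$ with this property. The dual argument using condition (a) produces $\{V_{m_j}\}\subset\mathscr{B}^\sharp$ covering $B$ with $\mathrm{cl}^\flat(V_{m_j})\cap A=\emptyset$. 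The Tychonoff interlacing $F_k=U_{n_k}\setminus\bigcup_{j\le k}\mathrm{cl}^\flat(V_{m_j})$ and $G_j=V_{m_j}\setminus\bigcup_{k\le j}\mathrm{cl}^\sharp(U_{n_k})$ then assembles disjoint open decreasing $\bigcup_k F_k\supset A$ and open increasing $\bigcup_j G_j\supset B$, so $E$ is normally preordered.

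Nachbin's Urysohn lemma now supplies, for each pair $(n,n')$ with $\mathrm{cl}^\flat(V_{n'})\subset V_n$, a continuous isotone $f_{n,n'}:E\to[0,1]$ equal to $1$ on $V_{n'}$ and $0$ off $V_n$; enumerate these as $\{f_i\}$, and construct $\{g_j\}$ dually from pairs in $\mathscr{B}^\flat$. Uniform convergence makes $p$ a $\mathscr{T}$-continuous quasi-pseudo-metric, and the monotonicity of the summands makes $P(x,r)$ upper and open, hence in $\mathscr{T}^\sharp$ (and dually $Q(x,r)\in\mathscr{T}^\flat$). For the reverse inclusion, given $x\in V\in\mathscr{T}^\sharp$, pick a basic $V_n\subset V$ containing $x$; condition (a) applied to $E\setminus V_n$ and $i(x)$ returns an open decreasing $U\supset E\setminus V_n$ and an open increasing $V^*\ni x$ with $U\cap V^*=\emptyset$, hence $\mathrm{cl}^\flat(V^*)\subset E\setminus U\subset V_n$. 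A basic $V_{n'}\subset V^*$ containing $x$ then completes an admissible pair whose associated $f_i$ satisfies $f_i(x)=1$ and $f_i\equiv 0$ off $V$, giving $P(x,2^{-i})\subset V$. Condition (b) and the family $\{g_j\}$ treat $\mathscr{T}^\flat$ symmetrically, and convexity together with semiclosedness from the hypotheses closes the verification of strict quasi-pseudo-metrizability.

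The principal obstacle is the first step, the preordered Urysohn--Tychonoff upgrade from regularity to normality: the two closures $\mathrm{cl}^\sharp$, $\mathrm{cl}^\flat$ and the two regularity conditions (a) and (b) must be played against each other in complementary ways, whereas in the classical case a single topology and a single closure suffice. Once normality is available, Nachbin's lemma and the assembly of $p$ from a countable monotone separating family are essentially routine.
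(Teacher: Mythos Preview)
The paper does not supply its own proof of this theorem: it is stated as Kelly's result, attributed to \cite[Theor.~2.8]{kelly63}, and immediately followed by the remark that it is hard to apply because second countability of $\mathscr{T}$ does not pass to $\mathscr{T}^\sharp$ and $\mathscr{T}^\flat$. There is therefore nothing in the paper to compare against, and your proposal must be judged on its own merits.

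Your argument is a faithful and correct translation of Kelly's bitopological Urysohn metrization proof into the preordered framework. The first step---upgrading regularity to normal preordering via a Tychonoff interlacing---is sound: for $x\in A$ the set $E\setminus W_x'$ is the complement of an open increasing set, hence $\mathscr{T}^\sharp$-closed, which justifies $\mathrm{cl}^\sharp(U_{n_x})\subset E\setminus W_x'$; dually for the $\mathscr{T}^\flat$-closures. The sets $F_k$ are intersections of $\mathscr{T}^\flat$-open sets and hence open decreasing, and the standard disjointness check goes through unchanged. In the second step, Nachbin's Urysohn lemma is correctly invoked with the pair $(E\setminus V_n,\ \mathrm{cl}^\flat(V_{n'}))$: the latter is $\mathscr{T}$-closed and increasing because its complement is open decreasing. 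In the third step, $p(x,\cdot)$ is anti-isotone (both the $f$-summands and the $g$-summands contribute anti-isotone terms in $y$), so $P(x,r)$ is increasing, and the continuity of $p$ makes it $\mathscr{T}$-open, hence $\mathscr{T}^\sharp$-open; the regularity argument you give produces the required admissible pair $(n,n')$ with $x\in V_{n'}$ and $\mathrm{cl}^\flat(V_{n'})\subset V_n\subset V$, yielding $P(x,2^{-i})\subset V$. The dual verification for $\mathscr{T}^\flat$ is symmetric, and convexity and semiclosedness are part of the hypotheses. The proof is complete and essentially the one Kelly gave in the bitopological language.
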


Unfortunately, this theorem is not so easily applied to topological
preordered spaces because  the second countability of $\mathscr{T}$
does not imply the second countability of the coarser topologies
$\mathscr{T}^\sharp$ and $\mathscr{T}^\flat$. This type of
difficulty is met for the various  quasi-pseudo-metrizability
results that can be found in the literature on bitopological spaces.
Nevertheless, we shall show that by weakening the thesis it is
indeed possible to prove

\begin{theorem} \label{bhs}
The following conditions are equivalent for a topological preordered
space $(E,\mathscr{T},\le)$
\begin{itemize}
\item[(a)] $(E,\mathscr{T},\le)$ is a second countable completely
regularly preordered space,
\item[(b)] $(E,\mathscr{T},\le)$ is separable and
quasi-pseudo-metrizable.
\end{itemize}

% Every second countable completely
%regularly preordered space is quasi-pseudo-metrizable.

% Let $(E,\mathscr{T},\le)$ be a convex regularly preordered
%space or a completely regularly preordered space, and let
%$\mathscr{T}$ be second countable then $(E,\mathscr{T},\le)$ is
%quasi-pseudo-metrizable.
\end{theorem}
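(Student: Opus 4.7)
The plan is to prove $(b)\Rightarrow(a)$ by invoking the previous proposition together with a standard fact about separable pseudo-metric spaces, and to prove $(a)\Rightarrow(b)$ by the classical Urysohn--Nachbin route: extract a \emph{countable} family of continuous isotone functions that simultaneously generates $\mathscr{T}$ as an initial topology and detects $\le$, then glue them into an admissible quasi-pseudo-metric via a $2^{-n}$-weighted sum of positive parts.

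For $(b)\Rightarrow(a)$, the previous proposition already gives that quasi-pseudo-metrizable implies completely regularly preordered, and a separable pseudo-metric space is second countable because $d$-balls of rational radius around a countable dense subset form a base. For $(a)\Rightarrow(b)$, let $\mathcal{G}$ denote the family of continuous isotone functions $g:E\to[0,1]$. By (i), the sets $g^{-1}([0,a))$ and $g^{-1}((b,1])$, for $g\in\mathcal{G}$ and $a,b\in\mathbb{Q}\cap[0,1]$, form a subbase of $\mathscr{T}$. Since $E$ is second countable, hence Lindel\"of, the standard subbase-extraction argument (cover each element of a countable base by finite intersections of subbase elements and apply Lindel\"of) yields a countable subfamily $\{g_n^{\mathrm{top}}\}\subset\mathcal{G}$ still generating $\mathscr{T}$ as an initial topology. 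To capture the preorder I exploit that a completely regularly preordered space is closed preordered, so $G(\le)^c$ is open in the Lindel\"of space $E\times E$. For each $(x,y)\in G(\le)^c$, (ii) produces a $g\in\mathcal{G}$ with $g(x)>g(y)$; choosing rationals $g(y)<a<b<g(x)$ places $(x,y)$ in the open rectangle $g^{-1}((b,1])\times g^{-1}([0,a))\subset G(\le)^c$. A countable Lindel\"of subcover of $G(\le)^c$ by such rectangles furnishes a countable family $\{g_n^{\mathrm{ord}}\}\subset\mathcal{G}$ with the property that $x\not\le y$ implies $g_n^{\mathrm{ord}}(x)>g_n^{\mathrm{ord}}(y)$ for some $n$. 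Reindexing $\{g_n^{\mathrm{top}}\}\cup\{g_n^{\mathrm{ord}}\}$ as $\{f_n\}$ produces a countable family of continuous isotone functions that still generates $\mathscr{T}$ as initial topology (adding continuous functions does not grow it beyond $\mathscr{T}$) and now satisfies $x\le y\iff f_n(x)\le f_n(y)$ for all $n$.

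I then set
\[ p(x,y)=\sum_{n=1}^\infty 2^{-n}\max\bigl(f_n(x)-f_n(y),\,0\bigr),\qquad q(x,y)=p(y,x). \]
The termwise inequality $(\alpha+\beta)^+\le\alpha^++\beta^+$ with $\alpha=f_n(x)-f_n(y)$ and $\beta=f_n(y)-f_n(z)$ delivers the triangle inequality summand by summand, and $p(x,x)=0$ is immediate, so $p$ is a quasi-pseudo-metric with conjugate $q$. Isotonicity of each $f_n$ combined with the defining property of $\{f_n\}$ yields $p(x,y)=0\iff x\le y$, i.e.\ $G(\le)=\{(x,y):p(x,y)=0\}$. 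Finally $(p+q)(x,y)=\sum_n 2^{-n}|f_n(x)-f_n(y)|$ is the standard weighted pseudo-metric inducing the initial topology of $\{f_n\}$, which is $\mathscr{T}$; separability follows from second countability.

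The main obstacle is the \emph{simultaneous} countability of the topology-generating and preorder-detecting families. Condition (i) alone supplies topology-countability by subbase extraction, while condition (ii) is an uncountable existence statement over all of $\mathcal{G}$, and a priori no countable subfamily detects $\le$. The decisive observation that unlocks the theorem is that closedness of $\le$ makes $G(\le)^c$ an open subset of the second countable space $E\times E$, so that (ii) can be Lindel\"ofized into a countable version. It is also worth noting that Kelly's bitopological theorem recorded in the excerpt cannot be applied directly: second countability of $\mathscr{T}$ does not imply second countability of $\mathscr{T}^\sharp$ or $\mathscr{T}^\flat$, which is precisely why the conclusion must be relaxed from strict quasi-pseudo-metrizability to ordinary quasi-pseudo-metrizability.
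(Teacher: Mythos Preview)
Your proof is correct. Both directions are sound, and the explicit quasi-pseudo-metric $p(x,y)=\sum_n 2^{-n}(f_n(x)-f_n(y))^+$ works exactly as you claim: the triangle inequality follows from subadditivity of the positive part, $p(x,y)=0$ iff $x\le y$ by construction of the $f_n$, and $p+q=\sum_n 2^{-n}|f_n(x)-f_n(y)|$ is the standard pseudo-metric inducing the initial topology of $\{f_n\}$, which is $\mathscr{T}$.

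Your route differs from the paper's in two places. For the order-detecting countable family both arguments use essentially the same idea---second countability of $E\times E$ lets one reduce the uncountable family $\{(x,y):g(x)\le g(y)\}_{g}$ to a countable one---though the paper phrases it as ``an arbitrary intersection of closed sets in a second countable space reduces to a countable intersection'' while you cover $G(\le)^c$ by open rectangles and apply Lindel\"of. For the topology-generating countable family, however, the paper takes a considerably longer path: it passes to the quotient $E/\!\!\sim$, metrizes it via Urysohn's theorem, and then manufactures isotone/anti-isotone functions adapted to metric balls of shrinking radius, extracting countably many via Lindel\"of coverings at each scale. Your direct subbase-extraction argument (extract a countable sub-subbase from any subbase of a second countable space) is more elementary and bypasses the quotient entirely. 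Finally, the paper packages the countable family into a quasi-uniformity with countable base and invokes Nachbin's metrization theorem (Theorem~\ref{nac}), whereas you write down the quasi-pseudo-metric explicitly; the two amount to the same thing, but your version is self-contained. The paper's approach has the minor advantage that the quotient construction and the functions $f^{(n)}_{[x]},g^{(n)}_{[x]}$ make the connection with the completely-regularly-ordered structure more visible, but your argument is shorter and uses fewer external results.
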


%As it can be observer here we have actually two versions, one for
%the regularly preordered space and the other for the completely
%regularly preordered space. The reason is that unlike the
%discrete-preorder case, in the general case, a completely regularly
%preordered space need not be a regularly preordered space.

Let us recall that in a pseudo-metric space, separability, second
countability and the Lindel\"of property are equivalent \cite[Theor.
16.11]{willard70}.

Suppose that $(E,\mathscr{T},\le)$ is separable and
quasi-pseudo-metrizable. Then as $\mathscr{T}$ is induces from the
pseudo-metric $p+q$, $(E, \mathscr{T})$ is second countable, and by
Nachbin's characterization cited above (paragraph of Eq.
(\ref{vaz})), $(E,\mathscr{T},\le)$ is completely regularly
preordered. Thus we have proved (b) $\Rightarrow$ (a), and it
remains to prove (a) $\Rightarrow$ (b). To this end, we shall make
use of the following result due to Nachbin \cite[Theor.
8]{nachbin65}, which generalizes the well known metrization theorem
of Alexandroff and Urysohn.

%Since in a quasi-pseudo-metrizable space the topology $\mathscr{T}$
%is  induced from the pseudo-metric $p+q$, separability  of
%$(E,\mathscr{T})$ in (b) is equivalent to second countability. We
%have also observed that every quasi-pseudo-metrizable space comes
%from a quasi-uniformity and hence is a completely regularly
%preordered space, thus  the implication (b) $\Rightarrow$ (a) has
%been  proved and it remains to prove (a) $\Rightarrow$ (b).

%\section{The proof}
%
%
%\begin{theorem} \label{bhb}
%Every second countable regularly preordered space
%$(E,\mathscr{T},\le)$ is normally preordered and admits a countable
%continuous  utility representation, that is, there are continuous
%utility functions $k\ge 1$, $f_k: E \to [0,1]$, such that
%\[
%x\le y \Leftrightarrow \forall k\ge 1, f_k(x)\le f_k(y).
%\]
%\end{theorem}

\begin{theorem} (Nachbin) \label{nac}
A quasi-uniformizable preordered space (i.e.\ completely regularly
preordered space) comes from a quasi-pseudo-metric if and only if
the quasi-uniformity admits a countable base.
\end{theorem}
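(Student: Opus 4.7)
The plan is to adapt the classical Alexandroff--Urysohn metrization construction from uniform to quasi-uniform structures. The necessity is immediate: given a quasi-pseudo-metric $p$, the family $W_n=\{(x,y):p(x,y)<1/n\}$ from Eq.~(\ref{vaz}) is a countable base of the quasi-uniformity it generates, and the identification of the symmetric topology with the topology of $p+q$, together with the preorder identity $G(\le)=\{(x,y):p(x,y)=0\}$, was already recorded in the paragraph following~(\ref{vaz}).

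For the substantive direction, assume $\mathcal{U}$ admits a countable base. First I would refine it to a nested sequence $V_0\supset V_1\supset V_2\supset\cdots$ with $V_0=E\times E$ and the three-fold composition property $V_{n+1}\circ V_{n+1}\circ V_{n+1}\subset V_n$ for every $n\ge 0$; such a sequence is obtained by iterating twice the two-fold refinement axiom of a quasi-uniformity. Next I would introduce the auxiliary gauge
\[
f(x,y)=\inf\{2^{-n}:(x,y)\in V_n\},
\]
with the convention $\inf\emptyset=1$, so that $(x,y)\in V_n$ is equivalent to $f(x,y)\le 2^{-n}$ and $f(x,y)=0$ precisely when $(x,y)\in\bigcap_n V_n$. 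Then I would set
\[
p(x,y)=\inf\Bigl\{\sum_{i=1}^{k}f(x_{i-1},x_i):k\ge 1,\ x=x_0,x_1,\ldots,x_k=y\Bigr\},
\]
which is a quasi-pseudo-metric by construction---the triangle inequality is automatic from the infimum over chains, and $p(x,x)=0$ because $f(x,x)=0$---and which trivially satisfies $p\le f$.

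The main obstacle is the reverse chain inequality $f(x,y)\le 2\,p(x,y)$, proved by induction on the chain length $k$ via the estimate $f(x_0,x_k)\le 2\sum_{i=1}^{k}f(x_{i-1},x_i)$. The inductive step splits the chain at the index $j$ where the partial sum first exceeds half of the total, applies the inductive hypothesis to the left and right sub-chains, and then invokes the three-fold composition $V_n\circ V_n\circ V_n\subset V_{n-1}$ with $n$ chosen so that each of the three resulting links---the two reduced sub-chains and the bridging link $(x_j,x_{j+1})$---lies in $V_n$. Crucially this argument uses only composition and never symmetry or inversion of entourages, so it transfers intact from the uniform to the quasi-uniform setting.

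Once the sandwich $p\le f\le 2p$ is in place, the $p$-balls $\{(x,y):p(x,y)<2^{-n}\}$ are squeezed between $V_{n+1}$ and $V_{n-1}$ and therefore constitute a base of $\mathcal{U}$; this forces the quasi-uniformity generated by $p$ to coincide with $\mathcal{U}$, whence $\mathscr{T}=\mathscr{T}(\mathcal{U}^{*})$ equals the $p+q$-topology and $G(\le)=\bigcap\mathcal{U}=\{(x,y):p(x,y)=0\}$. Thus $p$ is admissible in the sense of the definition preceding Proposition~\ref{hwx}, and the theorem follows.
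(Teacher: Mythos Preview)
The paper does not supply its own proof of this theorem: it is stated with attribution to Nachbin \cite[Theor.~8]{nachbin65} and used as a black box in the proof of Theorem~\ref{bhs}. So there is no in-paper argument to compare against.

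That said, your proposal is correct and is exactly the classical Alexandroff--Urysohn (Frink) construction transported to the quasi-uniform setting, which is the standard route and presumably Nachbin's own. The crucial observation---that the chain-splitting induction for $f(x_0,x_k)\le 2\sum f(x_{i-1},x_i)$ never invokes symmetry of the entourages, only the three-fold composition $V_{n+1}\circ V_{n+1}\circ V_{n+1}\subset V_n$---is what makes the transfer go through, and you state this explicitly. The sandwich $p\le f\le 2p$ then pins the $p$-entourages between consecutive $V_n$'s, so $p$ generates $\mathcal U$, and the identifications $\mathscr T(\mathcal U^{*})=\mathscr T_{p+q}$ and $\bigcap\mathcal U=\{p=0\}$ follow as you say. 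One cosmetic remark: since you set $V_0=E\times E$, the convention $\inf\emptyset=1$ is never actually invoked; you could drop it.
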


%
%\begin{theorem}
%Let $(E,\mathscr{T},\le)$ be a second countable completely regularly
%preordered space (quasi-uniformizable preordered space) then
%%there is a countable quasi-uniformity $\mathcal{U}$ such that
%%$\mathscr{T}=\mathscr{T}(\mathcal{U})$ and $G(\le)=\bigcap
%%\mathcal{U}$. In particular,
%$(E,\mathscr{T},\le)$ is a
%quasi-pseudo-metrizable  space.
%\end{theorem

Given a preorder $\le$ we obtain an equivalence relation $\sim$
through ``$x\sim y$ if $x\le y$ and $y \le x$''. In the next proof
$E/\!\!\sim$ is the quotient space, $\mathscr{T}/\!\!\sim$ is the
quotient topology, and  $\lesssim$ is defined by, $[x]\lesssim [y]$
if $x\le y$ for some representatives. The quotient preorder is by
construction an order. The triple
$(E/\!\!\sim,\mathscr{T}/\!\!\sim,\lesssim)$ is a topological
ordered space and $\pi: E\to E/\!\!\sim$ is the continuous quotient
projection.

\begin{proof}[Proof of theorem \ref{bhs}, (a) $\Rightarrow$ (b)]
As a first step let us show that there is a countable family
$\mathcal{C}$ of continuous isotone functions $c_k: E \to [0,1]$,
$k\ge 1$, such that $x\le y$ if and only if $\forall k,  c_k(x)\le
c_k(y)$. Indeed, defined for every continuous isotone function $c$,
$U_c=\{(x,y): c(x)\le c(y)\}$, we have by complete preorder
regularity $G(\le)=\bigcap_{c} U_c$. Note that since $c$ is
continuous $U_c$ is closed in the product topology. But $E$ is
second countable thus $E\times E$ is second countable and hence
 any arbitrary intersection of closed sets can be
reduced to a countable intersection \cite[p. 180]{dugundji66}.
Therefore, there is  a countable family $\mathcal{C}$ of continuous
isotone functions $c_k$ such that $G(\le)=\bigcap_{c\in \mathcal{C}}
U_c$ which is the thesis.
%
%Let us show that the map $\pi: E\to E/\!\!\sim$ is open (see also
%\cite{minguzzi11}). Indeed let $x\in E$ and let $O\ni x$ be an open
%set. Let $f,g$ be respectively a continuous isotone and a continuous
%anti-isotone functions such that $f(x)=g(x)=1$ and
%$\min(f(y),g(y))=0$ for $y\notin O$. They exist because $E$ is a
%completely regularly preordered space \cite{nachbin65}. Being
%monotonous functions $f$ and $g$ can be written as $f=\tilde{f}\circ
%\pi$ and $g=\tilde{g}\circ \pi$, for continuous functions
%$\tilde{f},\tilde{g}$, respectively isotone and anti-isotone (their
%continuity follows from the universality property of the quotient
%map \cite[Theor. 9.4]{willard70}). The set $W=(E/\!\!\sim)\backslash
%\{[y]: \min(\tilde{f}([y]),\tilde{g}([y]))=0\}$ contains $[x]$ and
%is open because the functions $\tilde{f}$ and $\tilde{g}$ are
%continuous. Furthermore, $\pi^{-1}(W)=E\backslash \{y:
%\min(f(y),g(y))=0\}\subset O$ which proves that $\pi(O)$ is open
%because $[x]\in W\subset \pi(O)$. But it actually proves more,
%namely that for every open set $O\subset E$, $\pi^{-1}(\pi(O))=O$.
%Since $(E,\mathscr{T})$ is second countable and $\pi$ is open,
%$(E/\!\!\sim,\mathscr{T}/\!\!\sim)$ is second countable.

Since $(E,\mathscr{T},\le)$  is completely regularly preordered it
is convex, thus by \cite[Prop. 2.3]{minguzzi11c} every open set is
saturated with respect to $\pi$, namely if $O\in \mathscr{T}$ then
$\pi^{-1}(\pi(O))=O$, which implies that $\pi$ is open (actually a
quasi-homeomorphism). Since $(E,\mathscr{T})$ is second countable
and $\pi$ is open, we have that $(E/\!\!\sim,\mathscr{T}/\!\!\sim)$
is second countable.

Since $(E,\mathscr{T},\le)$ is a completely regularly preordered
space then $(E/\!\!\sim,\mathscr{T}/\!\!\sim,\lesssim)$ is a
completely regularly ordered space (immediate from the definitions)
hence closed ordered which implies that
$(E/\!\!\sim,\mathscr{T}/\!\!\sim)$ is Hausdorff. But again, since
$(E/\!\!\sim,\mathscr{T}/\!\!\sim,\lesssim)$ is a completely
regularly ordered space, $(E/\!\!\sim,\mathscr{T}/\!\!\sim)$ is
Tychonoff. By Urysohn's theorem $(E/\!\!\sim,\mathscr{T}/\!\!\sim)$
is metrizable with a metric $\tilde{\rho}$.

%Let us prove that the pseudo-metric
%$\rho(x,y):=\tilde{\rho}([x],[y])$ generates the topology
%$\mathscr{T}$. It is clear that the $\rho$-balls are
%$\mathscr{T}$-open since $\rho=\tilde{\rho}\circ (\pi \times \pi)$
%which is continuous, thus the topology generated by $\rho$ is
%coarser than $\mathscr{T}$. For the other direction, let $O\in
%\mathscr{T}$ and let $x\in O$. We have to prove that there is some
%$\rho$-ball centered at $x$ and contained in $O$. Since $E$ is a
%completely regularly preordered space it is convex  and we can
%assume that $O$ is convex. Since $\pi$ is open the set $\pi(O)$ is
%open and convex thus there is some $r>0$ such that $\{[y]:
%\tilde{\rho}([x],[y])<r\}\subset \pi(O)$. But if $[w]\in \{[y]:
%\tilde{\rho}([x],[y])<r\}$ then $[w]\in \pi(O)$ thus some
%representative $w$ belongs to $O$. As $O$ is convex
%$\pi^{-1}([w])\subset O$. We conclude that all the points in
%$\pi^{-1}(\{[y]: \tilde{\rho}([x],[y])<r\})=\{y: {\rho}(x,y)<r\}$
%are contained in $O$, which is the thesis.  Note from the definition
%of $\rho$ that $\rho(x,y)=0$ implies $[x]=[y]$.

Now, the strategy is to construct the quasi-uniformity as the weak
quasi-uniformity $\mathcal{W}$ of a countable family $\mathcal{P}$
of continuous isotone functions with values in $[0,1]$. Let us
recall that if $f:E\to [0,1]$ is a function then the sets $\{(x,y):
f(x)-f(y)< 1/k\}$ for every natural $k\ge 1$ form a (countable) base
for a quasi-uniformity on $E$. If $\mathcal{P}$ counts more than one
function then $\mathcal{W}$ is given by the smallest filter
containing all the single quasi-uniformities. The quasi-uniformity
$\mathcal{W}$ admits a countable base if $\mathcal{P}$ is countable,
indeed a base is given by all the finite intersections of the base
elements generating the single function quasi-uniformities.

As a first step we include the family $\mathcal{C}$ into
$\mathcal{P}$, in this way we obtain that $\bigcap
\mathcal{W}=G(\le)$ and that this equation cannot be spoiled by the
inclusion in $\mathcal{P}$ of arbitrary families of continuous
isotone functions. Therefore, we have  only to show that we can find
a countable family $\mathcal{Q}$ of continuous isotone functions on
$E$ with values in $[0,1]$,  such that the weak quasi-uniformity of
that family induces a topology as fine as $\mathscr{T}$ (since every
(anti)isotone function on $E$ passes to the quotient, with some
abuse of notation, we will denote in the same way the functions on
$E$ or on $E/\!\!\sim$). Let $\tilde{\rho}$ be the metric on
$E/\!\!\sim$ mentioned above. For every $n\ge 1$ we consider a
covering on $E/\!\!\sim$ by open $\tilde{\rho}$-balls of radius
$1/n$, then for every point $[x]\in E/\!\!\sim$ we construct a pair
of functions $f^{(n)}_{[x]}, g^{(n)}_{[x]}: E/\!\!\sim \, \to
[0,1]$, the former continuous and isotone and the latter continuous
and anti-isotone such that $f^{(n)}_{[x]}([x])=g^{(n)}_{[x]}([x])=1$
and $\textrm{min}(f^{(n)}_{[x]}, g^{(n)}_{[x]})([y])=0$ whenever
$\tilde\rho([x],[y])\ge 1/n$ (they exist by definition of completely
regularly ordered space). The open sets $V^{(n)}([x])=
(E/\!\!\sim)\backslash\{[y]: \textrm{min}(f^{(n)}_{[x]},
g^{(n)}_{[x]})([y])=0\}$ give an open covering of $E/\!\!\sim$ and
each of these sets is contained in an open ball of radius 1/n. By
the Lindel\"of property implied by second countability \cite[Theor.
16.9]{willard70} there is a countable subcovering which corresponds
to points $[x^{(n)}_i]$ and functions $f^{(n)}_{[x^{(n)}_i]},
g^{(n)}_{[x^{(n)}_i]}$. We add for each $n$ and $i$ the (lifted)
continuous isotone functions $f^{(n)}_{[x^{(n)}_i]}$ and
$1-g^{(n)}_{[x^{(n)}_i]}$ to $\mathcal{P}$ in such a way that the
weak quasi-uniformity $\mathcal{W}$ satisfies
$\mathscr{T}=\mathscr{T}(\mathcal{W}^{*})$.

Indeed, if $O\ni x$, with $O\in \mathscr{T}$  then $\pi(O)\ni [x]$
and we have already proved that  $\pi(O)\in \mathscr{T}/\!\!\sim$
and $\pi^{-1}(\pi(O))=O$. There is some $n$ such that the open
$\tilde\rho$-ball of radius $2/n$ centered at $[x]$ is contained in
$\pi(O)$. Since the sets $\{V^{(n)}([x^{(n)}_i])\}_i$ give a
covering there is some $i$ such that  $[x]\in
V^{(n)}([x^{(n)}_i])\subset \pi(O)$, where the inclusion follows
from the fact that the set $V^{(n)}([x^{(n)}_i])$ is contained in a
$\tilde{\rho}$-ball of radius $1/n$. In particular,
$f^{(n)}_{[x^{(n)}_i]}(x)>0$ and $g^{(n)}_{[x^{(n)}_i]}(x)>0$. Let
$j\ge 1$ be such that $f^{(n)}_{[x^{(n)}_i]}(x)>1/j$ and
$g^{(n)}_{[x^{(n)}_i]}(x)>1/j$ and let $U\cap V^{-1}\in
\mathcal{W}^{*}$ be given by
\begin{align*}
U&=\{(x,y): f^{(n)}_{[x^{(n)}_i]}(x)-f^{(n)}_{[x^{(n)}_i]}(y)<
1/j\},\\
V&=\{(x,y):
(1-g^{(n)}_{[x^{(n)}_i]}(x))-(1-g^{(n)}_{[x^{(n)}_i]}(y))< 1/j\},
\end{align*}
to which corresponds a neighborhood of $x$ in the topology
$\mathscr{T}(\mathcal{W}^{*})$ given by $(U\cap V^{-1})(x)=\{y:
f^{(n)}_{[x^{(n)}_i]}(x)-f^{(n)}_{[x^{(n)}_i]}(y)< 1/j \textrm{ and
} g^{(n)}_{[x^{(n)}_i]}(x)-g^{(n)}_{[x^{(n)}_i]}(y)< 1/j\}\subset
\{y: f^{(n)}_{[x^{(n)}_i]}(y)>0 \textrm{ and }
g^{(n)}_{[x^{(n)}_i]}(y)>0\}=\pi^{-1}(V^{(n)}([x^{(n)}_i]))\subset
O$. This last inclusion proves that
$\mathscr{T}=\mathscr{T}(\mathcal{W}^{*})$.

We have shown that $(E,\mathscr{T},\le)$ is quasi-uniformizable
where the quasi-uniformity admits a countable base thus
$(E,\mathscr{T},\le)$ is quasi-pseudo-metrizable.

%For the last statement, every $T_{3\nicefrac{1}{2}}$-preordered
%space is convex \cite[Prop. 6, Chap. 2]{nachbin65} and every
%countable quasi-uniformity comes from a quasi-pseudo-metric
%\cite[Theor. 8, Chap. 2]{nachbin65}.

\end{proof}

It should be noted that in (a) $\Rightarrow$ (b) we do not assume
that  $E$ is regularly preordered. This does not mean that the
assumption is stronger than expected because a completely regularly
preordered space need not be regularly preordered \cite[Example
1]{kunzi94b}. This is a crucial difference with respect to the usual
discrete-preorder version.

We do not use preorder regularity in theorem \ref{bhs} because this
condition is not necessary in order to obtain (b), namely a
separable
 quasi-pseudo-metrizable space need not be regularly
preordered. An example has been given in \cite[Example 1]{kunzi94b}.
This example shows also that there are separable
quasi-pseudo-metrizable spaces which are not strictly
quasi-pseudo-metrizable. Indeed, the latter spaces are perfectly
normally preordered because of a result due to Patty \cite[Theor.
2.3]{patty67} and hence regularly preordered.

A comparison with the discrete-preorder version is clarified by the
following result
%(for a stronger version see \cite{minguzzi11f})
\begin{theorem}
Every  second countable  convex regularly preordered space is a
completely regularly preordered space.
\end{theorem}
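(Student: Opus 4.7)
The plan is to upgrade the space to a normally preordered one and then invoke the cited Nachbin result that convex normally preordered spaces are completely regularly preordered. Since convexity is assumed, it suffices to show that any closed decreasing $A$ and closed increasing $B$ with $A\cap B=\emptyset$ can be separated by an open decreasing set and an open increasing set.

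First, for each $x\in B$, the set $i(x)$ is closed (semiclosed preorderedness, built into regular preorderedness) and contained in $B$, hence disjoint from $A$; condition (a) of regular preorderedness yields an open decreasing $U^A_x\supset A$ and an open increasing $V_x\ni x$ with $U^A_x\cap V_x=\emptyset$. The key observation is that $F_x:=E\setminus U^A_x$ is a \emph{closed increasing} set that contains $V_x$ and is disjoint from $A$; it plays the role of the topological closure of $V_x$ in the classical Lindel\"of--regular--implies--normal proof, but is guaranteed to remain monotone. Symmetrically, for each $y\in A$, condition (b) applied to $d(y)$ and $B$ produces an open decreasing $U_y\ni y$, an open increasing $V^B_y\supset B$, and a \emph{closed decreasing} set $G_y:=E\setminus V^B_y\supset U_y$ disjoint from $B$.

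Second, since $E$ is second countable it is hereditarily Lindel\"of, so the open covers $\{U_y:y\in A\}$ and $\{V_x:x\in B\}$ admit countable subcovers $A\subset\bigcup_n U_{y_n}$ and $B\subset\bigcup_n V_{x_n}$. I would then mimic the classical construction, using the monotone substitutes $F_{x_k}$, $G_{y_k}$ in place of topological closures:
\[
\tilde U_n=U_{y_n}\setminus\bigcup_{k\le n}F_{x_k},\qquad \tilde V_n=V_{x_n}\setminus\bigcup_{k\le n}G_{y_k}.
\]
Because a decreasing set minus an increasing one is decreasing, each $\tilde U_n$ is open decreasing and each $\tilde V_n$ is open increasing. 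The unions $U=\bigcup_n\tilde U_n$ and $V=\bigcup_n\tilde V_n$ are then open monotone sets separating $A$ and $B$: the inclusions $A\subset U$ and $B\subset V$ follow from $F_{x_k}\cap A=\emptyset$ and $G_{y_k}\cap B=\emptyset$, while $U\cap V=\emptyset$ follows from the usual index comparison (for $z\in\tilde U_n\cap\tilde V_m$ with $n\le m$, $z\in U_{y_n}\subset G_{y_n}$ contradicts $z\notin G_{y_n}$; the case $m<n$ is symmetric). Thus $(E,\mathscr{T},\le)$ is normally preordered, and Nachbin's cited result finishes the proof.

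The main obstacle I anticipate is precisely the one dealt with above: the topological closure of an increasing or decreasing set need not be monotone, so the direct transcription of the classical Lindel\"of-regular-implies-normal argument breaks down. Replacing topological closures by the canonical closed monotone sets $F_x$ and $G_y$ supplied by the two regularity clauses is the technical device that makes the induction go through; everything else is a routine adaptation of the discrete-preorder proof.
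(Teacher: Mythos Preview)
Your argument is correct and follows the same overall strategy as the paper: first establish that the space is normally preordered, then apply Nachbin's result that convex normally preordered spaces are completely regularly preordered. The difference is purely in how the normality step is obtained. The paper dispatches it in one line by citing \cite[Theor.~5.3]{minguzzi11f}, which asserts that every second countable regularly preordered space is (perfectly) normally preordered. You instead supply a direct, self-contained proof of that fact by adapting the classical ``regular $+$ Lindel\"of $\Rightarrow$ normal'' argument to the monotone setting, with the key technical device being the replacement of topological closures by the closed monotone complements $F_x=E\setminus U^A_x$ and $G_y=E\setminus V^B_y$ furnished by the two clauses of regular preorderedness. Your version is therefore more elementary and self-contained; the paper's version is shorter but pushes the work into an external reference (whose proof, not surprisingly, is essentially the argument you wrote down).
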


\begin{proof}
In \cite[Theor. 5.3]{minguzzi11f} it has been proved that  every
second countable regularly preordered space is (perfectly) normally
preordered. Since every convex normally preordered space is a
completely regularly preordered space the thesis follows.
\end{proof}

\begin{lemma} \label{blx}
Let $(E,\mathscr{T},\le)$ be a second countable completely regularly
preordered space, then there is a countable family $\mathcal{F}$ of
continuous isotone functions, $k\ge 1$, $f_k: E \to [0,1]$ such that
(i) $\mathscr{T}$ is the initial topology generated by
$\mathcal{F}$, and (ii) $x\le y$ if and only if for every $k\ge 1$,
$f_k(x)\le f_k(y)$.
\end{lemma}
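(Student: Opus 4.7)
The plan is to extract the countable family already assembled in the proof of Theorem \ref{bhs}(a)$\Rightarrow$(b); the lemma is essentially a restatement of what was built there, repackaged in terms of the initial topology and the preorder rather than a weak quasi-uniformity.

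For property (ii) I would take the countable family $\mathcal{C}=\{c_{k}\}$ produced in the opening paragraph of that proof. Complete regular preorder gives $G(\le)=\bigcap_{c}U_{c}$ with $U_{c}=\{(x,y):c(x)\le c(y)\}$ and $c$ ranging over all continuous isotone functions $E\to[0,1]$; each $U_{c}$ is closed in the product topology on $E\times E$, and since $E\times E$ is second countable an arbitrary intersection of closed sets reduces to a countable one \cite[p.~180]{dugundji66}.

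For property (i) I would adjoin to $\mathcal{C}$ the family $\mathcal{Q}$ constructed via the quotient and metrization argument of the same proof. Since $(E,\mathscr{T},\le)$ is convex and completely regularly preordered, $(E/\!\!\sim,\mathscr{T}/\!\!\sim,\lesssim)$ is a second countable Tychonoff ordered space, hence metrizable by Urysohn with a metric $\tilde\rho$. For each $n\ge 1$ and each $[x]\in E/\!\!\sim$, complete regular order on the quotient yields a continuous isotone $f^{(n)}_{[x]}$ and a continuous anti-isotone $g^{(n)}_{[x]}$ valued in $[0,1]$, both equal to $1$ at $[x]$ and with $\min(f^{(n)}_{[x]},g^{(n)}_{[x]})$ vanishing off the $\tilde\rho$-ball of radius $1/n$. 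The open sets $V^{(n)}([x])$ where both are strictly positive cover $E/\!\!\sim$ and are refined by the $1/n$-balls; Lindel\"of (second countability) extracts a countable subcover indexed by $(n,i)$. Lifting $f^{(n)}_{[x^{(n)}_{i}]}$ and $1-g^{(n)}_{[x^{(n)}_{i}]}$ back to $E$ through $\pi$ provides countably many continuous isotone functions forming $\mathcal{Q}$.

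Set $\mathcal{F}=\mathcal{C}\cup\mathcal{Q}$, a countable family of continuous isotone functions on $E$. Property (ii) is immediate: since every $f\in\mathcal{F}$ is isotone, $G(\le)\subset\bigcap_{f\in\mathcal{F}}U_{f}\subset\bigcap_{c\in\mathcal{C}}U_{c}=G(\le)$. For (i) one inclusion between topologies is automatic from continuity; for the reverse, given $O\in\mathscr{T}$ with $x\in O$, convexity yields $\pi^{-1}(\pi(O))=O$ and $\pi(O)\in\mathscr{T}/\!\!\sim$, so $\pi(O)$ contains a $\tilde\rho$-ball of radius $2/n$ around $[x]$ for some $n$, and then an index $i$ is picked with $[x]\in V^{(n)}([x^{(n)}_{i}])\subset\pi(O)$; a standard initial-topology neighborhood built from finitely many preimages under $f^{(n)}_{[x^{(n)}_{i}]}$ and $1-g^{(n)}_{[x^{(n)}_{i}]}$ is then contained in $O$, exactly as in the last paragraph of the proof of Theorem \ref{bhs}. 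The only real obstacle is this final passage between the quotient and $E$, which relies essentially on convexity of $(E,\mathscr{T},\le)$; the rest is bookkeeping.
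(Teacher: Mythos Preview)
Your proposal is correct and follows exactly the paper's approach: the paper's own proof of the lemma is a one-line reference back to the proof of Theorem~\ref{bhs}, asserting that the countable family $\mathcal{P}$ assembled there (namely $\mathcal{C}$ together with the lifted functions $f^{(n)}_{[x^{(n)}_i]}$ and $1-g^{(n)}_{[x^{(n)}_i]}$) already satisfies (i) and (ii). You unpack precisely this reference, and your observation that the quasi-uniformity neighbourhood $(U\cap V^{-1})(x)$ used there is in particular an initial-topology neighbourhood is the only bridge needed to pass from the statement $\mathscr{T}=\mathscr{T}(\mathcal{W}^{*})$ to the initial-topology formulation (i).
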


\begin{proof}
An inspection of the proof of theorem \ref{bhs} shows that we have
already proved that there is a countable family $\mathcal{P}$ of
continuous isotone functions, $k\ge 1$, $f_k: E \to [0,1]$ such that
(i) $\mathscr{T}$ is the initial topology generated by
$\mathcal{P}$, and (ii) $x\le y$ if and only if for every $k\ge 1$,
$f_k(x)\le f_k(y)$.
\end{proof}

%
%Theorem \ref{bhs} will be a consequence of the following main lemma.
%For our purposes it could be given in the isotone function version
%but we give a stronger utility version.
%
%
%\begin{lemma}
%Let $(E,\mathscr{T},\le)$ be a second countable completely regularly
%preordered space then there is a countable family $\mathcal{U}$ of
%continuous utilities, $k\ge 1$, $u_k: E \to [0,1]$ such that (i)
%$\mathscr{T}$ is the initial topology generated by $\mathcal{P}$,
%and (ii) $x\le y$ if and only if for every $k\ge 1$, $u_k(x)\le
%u_k(y)$.
%\end{lemma}
%
%\begin{proof}
%An inspection of the proof of theorem \ref{bhs} shows that we have
%already proved that there is a countable family $\mathcal{P}$ of
%continuous isotone functions, $k\ge 1$, $f_k: E \to [0,1]$ such that
%(i) $\mathscr{T}$ is the initial topology generated by
%$\mathcal{U}$, and (ii) $x\le y$ if and only if for every $k\ge 1$,
%$f_k(x)\le f_k(y)$.
%
%In \cite{minguzzi11} it is proved that from a countable family
%$\mathcal{F}$ of continuous isotone functions $f_k: E \to [0,1]$
%that satisfy (ii) it is possible to obtain a countable family of
%continuous utilities $v_k: E \to [0,1]$ that satisfy (ii) (in
%particular one utility $u: E\to [0,1]$ exists). Therefore we have
%only to prove that there is a countable family of utilities $h_k: E
%\to [0,1]$ whose initial topology is $\mathscr{T}$. For each $k\ge
%1$, consider the functions $a_k=u (1+f_k)/2$ and $b_k=1-u(1-f_k)$
%and include them in $\mathcal{U}$. They are continuous and take
%value in $[0,1]$. Further, $a_k$ is
%
%
%
%\end{proof}
%

\section{The ordered Hilbert cube}

In this section we investigate the ordered Hilbert cube and its
connection with (strict) quasi-pseudo-metrization.

\begin{theorem} \label{xvc}
The property of being a quasi-pseudo-metrizable preordered space is
hereditary.
\end{theorem}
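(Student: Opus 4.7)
The plan is to show that the restriction of an admissible pair of conjugate quasi-pseudo-metrics to a subspace remains admissible. Let $(E,\mathscr{T},\le)$ be quasi-pseudo-metrizable with admissible conjugate quasi-pseudo-metrics $p,q$, and let $S \subset E$ be endowed with the induced topology $\mathscr{T}_S$ and induced preorder $\le_S$. Define $p_S = p|_{S\times S}$ and $q_S = q|_{S\times S}$, and set $d_S = p_S + q_S$.

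First I would verify the elementary structural properties: both $p_S$ and $q_S$ satisfy the defining axioms of a quasi-pseudo-metric since these conditions (non-negativity, vanishing on the diagonal, triangle inequality) are preserved under restriction, and $q_S(x,y) = q(x,y) = p(y,x) = p_S(y,x)$, so they remain conjugate.

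Next I would check the two admissibility conditions. For the topology, $d_S = d|_{S\times S}$ where $d = p+q$ induces $\mathscr{T}$ on $E$; it is a standard fact that the topology induced on $S$ by the restriction of a pseudo-metric coincides with the subspace topology, and the balls $D_S(x,r) = D(x,r)\cap S$ make this immediate. Hence $d_S$ induces $\mathscr{T}_S$. For the graph of the preorder, by assumption $G(\le) = \{(x,y) \in E\times E : p(x,y)=0\}$, so
\[
G(\le_S) = G(\le) \cap (S\times S) = \{(x,y)\in S\times S : p_S(x,y)=0\},
\]
which is exactly the admissibility condition for $p_S$.

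There is no substantive obstacle here; the statement is essentially a bookkeeping verification that the definition of quasi-pseudo-metrizability passes to subspaces in the obvious way, the only point requiring any care being the observation that the subspace topology agrees with the metric-subspace topology of the restricted pseudo-metric $d_S$.
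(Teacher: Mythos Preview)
Your proof is correct and follows essentially the same approach as the paper: restrict the admissible pair $p,q$ to $S\times S$, observe that $d_S=d\vert_{S\times S}$ induces the subspace topology, and note that $G(\le_S)=G(\le)\cap(S\times S)=\{(x,y)\in S\times S:p_S(x,y)=0\}$. Your version is slightly more explicit in verifying the quasi-pseudo-metric axioms and conjugacy, but the argument is the same.
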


\begin{proof}
Assume $E$ is quasi-pseudo-metrizable and let $p,q$ be a pair of
conjugate quasi-pseudo-metrics. Let $S$ be a subspace then $x\le_S
y$ if and only if $p_S(x,y)=0$ where $p_S=p\vert_{S\times S}$.
Furthermore the induced topology $\mathscr{T}_S$ has a base of
neighborhoods given by the  $d$-balls intersected with $S$, $d=p+q$,
thus by the $d_S$-balls, where $d_S=p_S+q_S=d\vert_{S\times S}$.
\end{proof}

In general it is not true that every open increasing (decreasing)
set on the subspace $S$ is the intersection of an  open increasing
(resp. decreasing) set on $E$ with $S$. If this is the case $S$ is
called a {\em preorder subspace}
\cite{priestley72,mccartan68,kunzi90}. In a closed preordered space
every compact subspace $S$ is a preorder subspace \cite[Prop.
 2.6]{minguzzi11f}.

\begin{theorem} \label{bqs}
The property of being a strictly quasi-pseudo-metrizable preordered
space is hereditary with respect to preorder subspaces.
\end{theorem}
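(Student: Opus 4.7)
The plan is to transfer the pair $(p,q)$ directly to $S$ by restriction and check that each of the requirements in the definition of strict quasi-pseudo-metrizability survives the passage to the subspace. Set $p_S = p|_{S\times S}$ and $q_S = q|_{S\times S}$; these are clearly conjugate quasi-pseudo-metrics on $S$. The semiclosed preordered property is trivially inherited since $i_S(x) = i(x)\cap S$ and $d_S(x) = d(x)\cap S$ are closed in $\mathscr{T}_S$, and convexity is inherited in the usual way: given an $\mathscr{T}_S$-open neighborhood $O\cap S$ of $x\in S$, use convexity of $E$ to produce open decreasing $U'$ and open increasing $V'$ in $E$ with $x\in U'\cap V'\subset O$, then take their intersections with $S$.

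The substantive step is to show that the topology $\mathscr{P}_S$ on $S$ generated by the $p_S$-balls coincides with the upper topology $\mathscr{T}_S^{\sharp}$ of the subspace preorder, and analogously $\mathscr{Q}_S = \mathscr{T}_S^{\flat}$. First I would verify the purely metric fact that $\mathscr{P}_S$ is exactly the subspace topology $(\mathscr{P})_S$ induced from $\mathscr{P}$: the inclusion $\mathscr{P}_S\subseteq (\mathscr{P})_S$ is immediate, and the reverse inclusion follows from the argument of Remark \ref{voy}, namely given $y\in P(x,r)\cap S$ with $x\in E$, the $p_S$-ball $P_S(y,\,r-p(x,y))$ lies inside $P(x,r)\cap S$. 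Since by hypothesis $\mathscr{P} = \mathscr{T}^{\sharp}$, this gives $\mathscr{P}_S = (\mathscr{T}^{\sharp})_S$.

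The main obstacle, and the step where the preorder subspace hypothesis is essential, is then to identify $(\mathscr{T}^{\sharp})_S$ with $\mathscr{T}_S^{\sharp}$. The inclusion $(\mathscr{T}^{\sharp})_S\subseteq \mathscr{T}_S^{\sharp}$ is automatic, because the intersection with $S$ of an open increasing set of $E$ is an increasing subset of $(S,\le_S)$ that is open in $\mathscr{T}_S$. The converse inclusion $\mathscr{T}_S^{\sharp}\subseteq (\mathscr{T}^{\sharp})_S$ is precisely the content of the preorder subspace property: every $\mathscr{T}_S$-open increasing subset of $S$ arises as the trace of some $\mathscr{T}$-open increasing subset of $E$. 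Applying the same argument with $\sharp$ replaced by $\flat$ and $p$ by $q$ gives $\mathscr{Q}_S = \mathscr{T}_S^{\flat}$, which together with the convex semiclosed preordered property proves that $(S,\mathscr{T}_S,\le_S)$ is strictly quasi-pseudo-metrizable, with $(p_S,q_S)$ as an admissible pair.
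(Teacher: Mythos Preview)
Your proof is correct and follows essentially the same approach as the paper: restrict the quasi-pseudo-metrics to $S$, note that convexity and the semiclosed property are hereditary, and use the preorder subspace hypothesis to match the $p_S$-topology with $\mathscr{T}_S^\sharp$. The only difference is presentational: you factor the argument through the intermediate identity $\mathscr{P}_S=(\mathscr{T}^\sharp)_S$ (via Remark~\ref{voy}) and then invoke the preorder subspace property to obtain $(\mathscr{T}^\sharp)_S=\mathscr{T}_S^\sharp$, whereas the paper collapses these two steps into a single direct verification that each open increasing $V\subset S$ extends to $V'\in\mathscr{T}^\sharp$ and then pulls a $p$-ball back to a $p_S$-ball inside $V$.
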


\begin{proof}
It is well known that convexity and the semiclosed preordered space
property are hereditary. For the remainder of the proof it suffices
to define $p_S=p\vert_{S\times S}$, $q_S=q\vert_{S\times S}$ where
$S$ is a preorder subspace. Indeed, if $V\subset S$ is open
increasing in $S$, there is $V'$ open increasing in $E$ such that
$V=V'\cap S$. Let $x\in V$ then there is some $\epsilon>0$ such that
$P(x,\epsilon)\subset V'$ which implies  $P_S(x,\epsilon)\subset
V'$, where $P_S(x,\epsilon)$ is the $p_S$-ball of radius $\epsilon$
centered at $x$. The proof in the decreasing case is analogous.

\end{proof}

\begin{lemma}
If $(E,\mathscr{T},\le)$ is a quasi-pseudo-metrizable preordered
space, then it admits a quasi-pseudo-metric bounded by 1. If
$(E,\mathscr{T},\le)$ is a strictly quasi-pseudo-metrizable
preordered space,  then it admits a quasi-pseudo-metric bounded by 1
(in the sense of strict quasi-pseudo-metric spaces i.e.\ it
generates $\mathscr{T}^{\sharp}$ with the conjugate that generates
$\mathscr{T}^{\flat}$).
\end{lemma}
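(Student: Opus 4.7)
The plan is to truncate at $1$: starting from an admissible pair $(p,q)$, set $\tilde p(x,y)=\min\{p(x,y),1\}$ and $\tilde q(x,y)=\min\{q(x,y),1\}$. Clearly $\tilde q$ is the conjugate of $\tilde p$ and both take values in $[0,1]$, which is precisely the boundedness demanded by the statement.

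First I would verify that $\tilde p$ is a quasi-pseudo-metric. Reflexivity is immediate, and for the triangle inequality I would split on whether $\tilde p(x,y)+\tilde p(y,z)\ge 1$ or $<1$: in the first case the bound is trivial since $\tilde p(x,z)\le 1$; in the second, both summands are strictly less than $1$, hence coincide with $p(x,y)$ and $p(y,z)$, so $p(x,z)\le p(x,y)+p(y,z)<1$ yields $\tilde p(x,z)=p(x,z)\le \tilde p(x,y)+\tilde p(y,z)$. The same argument works for $\tilde q$. Moreover $\tilde p(x,y)=0$ iff $p(x,y)=0$, so $\{(x,y):\tilde p(x,y)=0\}=G(\le)$, which takes care of the preorder datum in the quasi-pseudo-metrizable case.

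The only remaining point is that truncation preserves the relevant topologies, and this follows from the observation that small balls are unchanged. For every $x\in E$ and every $0<r\le 1$ one has $\{y:\tilde p(x,y)<r\}=\{y:p(x,y)<r\}$, and likewise for $\tilde q$; moreover for the pseudo-metric $\tilde d=\tilde p+\tilde q$ and any $0<r\le 1$, if $\tilde d(x,y)<r$ then each summand is itself less than $1$ and equals the untruncated version, so the $\tilde d$-ball of radius $r$ at $x$ equals the $d$-ball of radius $r$ at $x$. In the quasi-pseudo-metrizable case this shows $\tilde d$ induces $\mathscr{T}$; in the strictly quasi-pseudo-metrizable case the analogous statement for $\tilde p$ and $\tilde q$ separately shows that they generate $\mathscr{T}^{\sharp}$ and $\mathscr{T}^{\flat}$ respectively. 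No step looks delicate beyond the case split used in the triangle inequality; the rest is bookkeeping around the fact that the balls of radius at most $1$ are unaffected by the truncation.
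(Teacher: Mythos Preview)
Your proof is correct and follows the same approach as the paper: truncate the given quasi-pseudo-metric via $h(a)=\min(a,1)$, check the triangle inequality (the paper phrases this as sublinearity of $h$, which is exactly your case split), and observe that balls of small radius are unaltered so that both the preorder condition and the relevant topologies are preserved. The only cosmetic difference is that the paper records the sublinearity of $h$ abstractly rather than unpacking it as a case analysis.
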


\begin{proof}
The function $h: [0,+\infty) \to [0,1]$, $h(a)=\textrm{min}(a,1)$,
is non-decreasing and sublinear, $h(a+b)\le h(a)+h(b)$. If $p$ is a
quasi-pseudo-metric then $p_1=h(p)$ satisfies the triangle
inequality by the sublinearity of $h$ and satisfies also
$p_1(x,x)=h(p(x,x))=h(0)=0$ thus it is a quasi-pseudo-metric.
Defined $q_1(x,y)=p_1(y,x)=h(q(x,y))$, we have that $d_1=p_1+q_1$ is
a pseudo-metric which generates the same topology of $d=p+q$ (they
have the same balls with radius smaller than 1)
 and furthermore, $p_1(x,y)=0$ iff
$p(x,y)=0$.

The proof in the strict case is similar. The quasi-pseudo-metrics
$p_1$ and $q_1$ are defined in the same way from $p$ and $q$, since
$p_1$ shares with $p$ the same balls of radius less than $1$, and
since $q_1$ shares with $q$ the same balls of radius less than $1$,
the thesis follows.
\end{proof}

Let $(E_n,\mathscr{T}_n,\le_n)$, $n \in \mathbb{N}$, be topological
preordered spaces and let $(E,\mathscr{T},\le)$ be the topological
preordered space in which $(E,\mathscr{T})$ is the product  space
$E=\Pi_{n \in \mathbb{N}} E_n$ endowed with the product topology and
$\le$ is the product preorder: $x\le y$ if for all $n \in
\mathbb{N}$, $x_n \le_n y_n$. We have the following

\begin{theorem} \label{mgy}
The product topological preordered space $E=\Pi_{n } E_n$ is
quasi-pseudo-metrizable if and only if each $E_n$ is
quasi-pseudo-metrizable.
\end{theorem}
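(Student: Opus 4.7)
The plan is to handle the two directions separately. The forward implication is a quick application of heredity (Theorem \ref{xvc}); the converse uses an explicit weighted-sum construction, analogous to the standard metrization of a countable product of pseudo-metric spaces.

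For ($\Rightarrow$), fix an arbitrary reference point $a_k \in E_k$ for each $k$ and, for each $n$, consider the slice
$$S_n = \{x \in E : x_k = a_k \text{ for all } k \ne n\}.$$
With the induced topology this is homeomorphic to $E_n$ via the projection on the $n$-th coordinate. With the induced preorder, for $x,y \in S_n$ the condition $x \le y$ reduces to $x_n \le_n y_n$, since all other coordinates coincide and each $\le_k$ is reflexive. Hence $E_n$ is preorder homeomorphic to the subspace $S_n$ of $E$, so Theorem \ref{xvc} yields that $E_n$ is quasi-pseudo-metrizable.

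For ($\Leftarrow$), use the previous lemma to choose for each $n$ an admissible conjugate pair $(p_n,q_n)$ on $E_n$ with $p_n,q_n \le 1$, and set
$$p(x,y) = \sum_{n=1}^{\infty} 2^{-n}\, p_n(x_n,y_n), \qquad q(x,y) = \sum_{n=1}^{\infty} 2^{-n}\, q_n(x_n,y_n).$$
Each series converges, and a termwise application of the properties of the $p_n$ shows that $p$ is a quasi-pseudo-metric whose conjugate is $q$. Moreover $p(x,y)=0$ if and only if $p_n(x_n,y_n)=0$ for every $n$, which by admissibility is equivalent to $x_n \le_n y_n$ for every $n$, i.e.\ $x \le y$ in the product preorder; so $G(\le) = \{(x,y): p(x,y)=0\}$ as required.

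The remaining step is to identify the topology of $d = p+q = \sum 2^{-n}(p_n+q_n)$ with the product topology. Writing $d_n = p_n+q_n$ (each bounded by $2$), this is the familiar argument: given $\epsilon>0$, choose $N$ with $\sum_{n>N} 2^{-n}\cdot 2 < \epsilon/2$ and then suitably small $d_n$-balls in the coordinates $n\le N$ produce a basic product neighborhood contained in the $d$-ball of radius $\epsilon$; conversely, any basic product neighborhood at $x$ constrains only finitely many coordinates, and a $d$-ball of small enough radius fits inside it because the tail sum contributes harmlessly. The only (minor) obstacle is this tail bookkeeping, which is made painless by the linearity of $p+q$ emphasized in the earlier remark; it is the same linearity that lets us split $d$ into the $p$- and $q$-parts and recover the two quasi-pseudo-metrics independently from a single weighted sum.
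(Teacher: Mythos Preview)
Your proof is correct and follows essentially the same approach as the paper: the forward direction uses heredity (Theorem \ref{xvc}) via a coordinate slice, and the converse uses the weighted sum $p(x,y)=\sum 2^{-n}p_n(x_n,y_n)$ of bounded admissible quasi-pseudo-metrics. The only cosmetic difference is that the paper cites Kelley for the fact that $d=\sum 2^{-n}d_n$ induces the product topology, whereas you sketch the standard tail-sum argument directly.
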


\begin{proof}
If $E$ is quasi-pseudo-metrizable $E_n$ is quasi-pseudo-metrizable
because it is preorder homeomorphic with a subset $S$ of $E$
obtained by fixing all the coordinates $x_k$ of $x$ to some value in
$E_k$ but for  $k=n$. One can then use theorem \ref{xvc}.

For the converse, let $p_n:E_n\times E_n\to [0,1]$ be a
quasi-pseudo-metric for $E_n$ bounded by $1$ and endow $E$ with the
quasi-pseudo-metric \[p(x,y)=\sum_{n=1}^{\infty} p_n(x_n,y_n)/2^n.\]
The proof that $p$ is a quasi-pseudo-metric is straightforward. Let
$q(x,y)=p(y,x)$ and analogously for $q_n$, $n\in \mathbb{N}$. The
pseudo-metric $d=p+q$ reads $d(x,y)=\sum_{n=1}^{\infty}
d_n(x_n,y_n)/2^n$ where $d_n=p_n+q_n$ is the pseudo-metric which
generates the topology $\mathscr{T}_n$. According to \cite[Theor.
14, Chap. 4]{kelley55} $d$ generates the product topology
$\mathscr{T}$. Finally, $p(x,y)=0$ if and only if for all $n\in
\mathbb{N}$, $p_n(x_n,y_n)=0$ which is equivalent to: for all $n\in
\mathbb{N}$, $x_n\le_n y_n$, that is, $x\le y$.

\end{proof}

One must be careful in trying to generalize the previous theorem to
the strict case. It is known that the countable product of
quasi-pseudo-metrizable spaces in the bitopological sense is
quasi-pseudo-metrizable in the bitopological sense
\cite{kelly63,salbany72}. This fact does not imply the existence of
a simple corresponding theorem in the strict
quasi-pseudo-metrization case for topological preordered spaces. The
reason is that the product bitopology can be different from the
bitopology induced by the product topology and product preorder.

%In other words while in the bitopological category an open
%increasing set in the product is defined by means of the product of
%the upper topologies of the corresponding factors, in our
%topological preordered case this is not so.

For $I$-spaces \cite{priestley72} (compare \cite{mccartan68}),
namely for topological preordered spaces for which the increasing
and decreasing hulls of open sets are open, it is possible to prove
a useful strict case generalization.

\begin{theorem}
If the product topological preordered space $E=\Pi_{n } E_n$ is
strictly quasi-pseudo-metrizable, then each factor  $E_n$ is
strictly quasi-pseudo-metrizable, furthermore if $E$ is also an
$I$-space then so are the factors $E_n$. If each factor $E_n$ is a
strictly quasi-pseudo-metrizable $I$-space, then $E$ is a strictly
quasi-pseudo-metrizable $I$-space. Finally, in this last case the
upper topology on $E$ is the product of the upper topologies of the
factors, and analogously for the lower topology.
\end{theorem}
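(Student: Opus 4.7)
I would fix any $c \in E$ and identify $E_n$ with the slice $S_n = \{c_1\} \times \cdots \times \{c_{n-1}\} \times E_n \times \{c_{n+1}\} \times \cdots \subset E$; this is a preorder homeomorphism since the product preorder restricted to $S_n$ reduces to $\le_n$. To invoke Theorem \ref{bqs}, I need $S_n$ to be a preorder subspace: given an open increasing $U \subset E_n$, the cylinder $V' = \prod_{k \neq n} E_k \times U$ is open in the product topology and increasing in the product preorder (by coordinate-wise comparison and reflexivity of each $\le_k$), and $V' \cap S_n$ recovers $U$. Thus strict quasi-pseudo-metrizability descends to each $E_n$. For the $I$-space inheritance, I would use that $\pi_n : E \to E_n$ is a continuous open surjection together with the identity $\pi_n^{-1}(i_n(O)) = i(\pi_n^{-1}(O))$ for open $O \subset E_n$ (immediate from coordinate-wise preorder and reflexivity); the right-hand side is open by the $I$-space property of $E$, and applying the open surjection $\pi_n$ yields $i_n(O)$ open.

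\textbf{Construction in the converse.} Suppose each $E_n$ is a strictly quasi-pseudo-metrizable $I$-space with admissible conjugate pair $(p_n, q_n)$; by the preceding lemma I may take $p_n, q_n \le 1$. I would define $p(x,y) = \sum_n p_n(x_n,y_n)/2^n$ and $q(x,y) = p(y,x)$; these are conjugate quasi-pseudo-metrics. Convexity and the semiclosed preordered property of $E$ are inherited coordinate-wise: a basic open neighborhood of $x$ splits into a factor-wise product of open decreasing and open increasing sets via convexity of each $E_n$, and $i(x) = \bigcap_n \pi_n^{-1}(i_n(x_n))$ is closed as an intersection of preimages of closed sets. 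The $I$-space property of $E$ follows since for a basic open $B = U_1 \times \cdots \times U_N \times \prod_{k>N} E_k$ one computes $i(B) = i_1(U_1) \times \cdots \times i_N(U_N) \times \prod_{k>N} E_k$, which is open because each $i_n(U_n)$ is open by hypothesis; for a general open set, $i$ distributes over the union covering it by basic opens.

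\textbf{Identifying the $p$-topology with $\mathscr{T}^\sharp$.} This is the main step, and it simultaneously yields the final statement of the theorem. First, the standard series estimate (choosing $N$ large so that $\sum_{n > N} 2^{-n}$ is small) shows that the topology generated by $p$ on $E$ equals the product of the topologies generated by the individual $p_n$, which by hypothesis is $\prod_n \mathscr{T}_n^\sharp$. Second, I would establish $\prod_n \mathscr{T}_n^\sharp = \mathscr{T}^\sharp$: the inclusion $\prod_n \mathscr{T}_n^\sharp \subset \mathscr{T}^\sharp$ is immediate because each subbasic $\pi_n^{-1}(U)$ with $U$ open increasing in $E_n$ is open and increasing in $E$; for the reverse, given $V$ open increasing in $E$ and $x \in V$, I would choose a basic product neighborhood $B \ni x$ with $B \subset V$ and note that $i(B)$ is again a cylinder whose non-trivial factors are increasing hulls of open sets, hence open by the $I$-space property of each $E_n$, so $i(B) \in \prod_n \mathscr{T}_n^\sharp$ and $x \in i(B) \subset i(V) = V$. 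The same argument applied to $q$ and $\mathscr{T}^\flat$ completes both the strict quasi-pseudo-metrizability of $E$ and the asserted factorization of the upper and lower topologies. The main obstacle is precisely this last identification: without the $I$-space hypothesis the upper topology of the product preorder could be strictly finer than the product of upper topologies, and the series quasi-pseudo-metric $p$ would then fail to generate $\mathscr{T}^\sharp$.
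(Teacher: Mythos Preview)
Your proof is correct and follows essentially the same route as the paper: the slice $S_n$ is shown to be a preorder subspace in the same way, the $I$-space inheritance uses the same identity $\pi_n^{-1}(i_n(O))=i(\pi_n^{-1}(O))$ together with openness of $\pi_n$, and the converse uses the same series quasi-pseudo-metric and the same increasing-hull-of-a-basic-box computation $i(B)=\prod_n i_n(U_n)$. The only difference is organizational: you factor the key identification as two equalities, first $\mathscr{T}(p)=\prod_n\mathscr{T}_n^\sharp$ via the tail estimate and then $\prod_n\mathscr{T}_n^\sharp=\mathscr{T}^\sharp$ via the $I$-space property, whereas the paper merges these into a single chain $P(x,\epsilon)\subset i(G)\subset V$ with the explicit choice $\epsilon=\min_k r_{i_k}/2^{i_k}$.
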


\begin{proof}
Each $E_n$ is  preorder homeomorphic with a subset $S$ of $E$
obtained by fixing all the coordinates $x_k$ of $x$ to some value in
$E_k$ but for  $k=n$. The subset $S$ just defined is actually a
preorder subspace because if $V\subset S$ is open increasing then
(omitting the preorder homeomorphism of $S$ with $E_n$)
$\pi_n^{-1}(V)$ is open increasing and $\pi_n^{-1}(V)\cap S=V$, and
analogously for the open decreasing sets. By theorem \ref{bqs}  $E$
is strictly quasi-pseudo-metrizable thus $S$ and hence $E_n$ is
strictly quasi-pseudo-metrizable. Furthermore, if $O$ is an open set
of $E_n$ then $\pi_n^{-1}(O)$ is an open set of $E$ and if $E$ is an
$I$-space $i(\pi_n^{-1}(O))=\pi_n^{-1}(i_{E_n}(O))$ is open, thus
$\pi_n(\pi_n^{-1}(i_{E_n}(O)))=i_{E_n}(O)$ is open because the
projection maps are open \cite[Theor. 8.6]{willard70}. The proof
that $d_{E_n}(O)$ is open is analogous. We conclude that each $E_n$
is an $I$-space.

For the converse, let us prove that $E$ is convex. Let $O$ be an
open set in the product topology and let $x\in O$. There  are open
sets $O_{i_1}\subset E_{i_1}$, $\cdots$ $O_{i_s}\subset E_{i_s}$,
$x_{i_k}\in O_{i_k}$, such that $\Pi_{n=1}^{\infty} W_n \subset O$,
where $W_n=O_{i_k}$ if $n=i_k$ for some $1\le k\le s$, or $W_n=E_n$
otherwise. Recalling that each topological preordered space $E_i$ is
convex, the sets $O_{i_k}$ can be chosen to be intersections
$O_{i_k}=U_{i_k}\cap V_{i_k}$ where $U_{i_k}$ is open decreasing and
$V_{i_k}$ is open increasing in $E_{i_k}$. Evidently defined
$U'=\Pi_{n=1}^{\infty} Y_n$ where $Y_n=U_{i_k}$ if $n=i_k$ for some
$1\le k\le s$, or $Y_n=E_n$ otherwise, and $V'=\Pi_{n=1}^{\infty}
Z_n$ where $Z_n=V_{i_k}$ if $n=i_k$ for some $1\le k\le s$, or
$Z_n=E_n$ otherwise, we have $x\in U'\cap V'\subset
\Pi_{n=1}^{\infty} W_n \subset O$ which proves that $E$ is convex
because $U'$ is open decreasing in $E$ and $V'$ is open increasing
in $E$.

Let us prove that $E$ is semiclosed preordered. Indeed, if $x\in E$,
using the definition of product order, $i(x)=\bigcap_n
E\backslash\pi_n^{-1}(E_n\backslash i_{E_n}(x_n))$ from which we
obtain that $i(x)$ is closed in $E$ because each $i_{E_n}(x_n)$ is
closed in $E_n$. Analogously, $d(x)$ is closed.

Let $p_n:E_n\times E_n\to [0,1]$ be a quasi-pseudo-metric for $E_n$
bounded by $1$ and endow $E$ with the quasi-pseudo-metric
$p(x,y)=\sum_{n=1}^{\infty} p_n(x_n,y_n)/2^n$. The proof that $p$ is
a quasi-pseudo-metric is straightforward. Let $V\subset E$ be an
open increasing set and let $x\in V$ then by definition of product
topology there are open sets $O_{i_1}\subset E_{i_1}$, $\cdots$
$O_{i_s}\subset E_{i_s}$, $x_{i_k}\in O_{i_k}$, such that defined
$G=\Pi_{n=1}^{\infty} W_n $, where $W_n=O_{i_k}$ if $n=i_k$ for some
$1\le k\le s$, or $W_n=E_n$ otherwise, we have $G\subset V$.  The
sets $i_{E_{i_k}}(O_{i_k})$ are open and increasing by the $I$-space
assumption. We define the open set on $E$, $Q=\Pi_{n=1}^{\infty}
R_n$ where $R_n=i_{E_{i_k}}(O_{i_k})$ if $n=i_k$ for some $1\le k\le
s$, or $R_n=E_n$ otherwise. Using the definition of product
preorder, $Q=i(G)$ (note that every base element on $E$ has the form
of $G$, as we proved that $Q$ is open, this formula shows, among the
other things, that $E$ is an $I$-space).

By strict quasi-pseudo-metrizability of  $E_{i_k}$ there are numbers
$r_{i_k}>0$ such that $P_{i_k}(x_{i_k},r_{i_k})\subset
i_{E_{i_k}}(O_{i_k})$ where $P_{i_k}(x_{i_k},r_{i_k})$ is a
$p_{i_k}$-ball centered at $x_{i_k}$. Let $\epsilon$ be the minimum
of $r_{i_k}/2^{i_k}$ for $k=1,\cdots, s$.

Let us prove that  $P(x,\epsilon)\subset Q\subset V$. The last
inclusion follows from the fact that $V$ is increasing and $G\subset
V$. For the former inclusion, if $y\in P(x,\epsilon)$ then
$p_{i_k}(x_{i_k},y_{i_k})/2^{i_k}<\epsilon \le r_{i_k}/2^{i_k}$ thus
$y_{i_k}\in P_{i_k}(x_{i_k},r_{i_k})\subset i_{E_{i_k}}(O_{i_k})$.
If we define $w \in E$ to be that point such that $w_{n}\in O_{n}$,
$y_{n}\in i_{E_n}(w_{n})$ for $n=i_k$, $k=1,\cdots, s$, and
$w_n=y_n$ otherwise, we have $y=i(w)$ and $w \in \Pi_{n=1}^{\infty}
W_n=G$, thus $y\in i(G)=Q$ which is the thesis.

The inclusion $P(x,\epsilon)\subset V$ proves that $p$ generates
$\mathscr{T}^\sharp$. The proof that $q$ generates
$\mathscr{T}^\flat$ is analogous.

The inclusion $Q\subset V$ proves that $\mathscr{T}^\sharp$
coincides with the product of the upper topologies on $E_n$.
Analogously, the product of the lower topologies on $E_n$ gives
$\mathscr{T}^\flat$.

\end{proof}

The canonical quasi-pseudo-metric for the real line $\mathbb{R}$
with the usual order is $m(x,y)=\textrm{max}(x-y,0)$. With this
choice $\mathbb{R}$ becomes a strict quasi-pseudo-metric $I$-space.
The interval $[0,1]$ is a preorder subspace of the real line, thus
the quasi-pseudo-metric on $\mathbb{R}$ induces on the interval
$[0,1]$ a quasi-pseudo-metric which is bounded by 1, and which makes
$[0,1]$ a strict quasi-pseudo-metrized space which is actually an
$I$-space. From the previous theorem  we get

\begin{proposition}
The Hilbert cube $H=[0,1]^{\mathbb{N}}$ once endowed with the
product topology and the product order is a strict
quasi-pseudo-metric ordered $I$-space with quasi-pseudo-metric
$p(x,y)= \sum_{n=1}^{\infty} \max(x_n-y_n,0)/2^n$.
\end{proposition}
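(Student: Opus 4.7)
The plan is to obtain the proposition as a direct application of the preceding product theorem, using the one-factor computation already outlined in the paragraph that introduces the canonical quasi-pseudo-metric $m(x,y)=\max(x-y,0)$ on $\mathbb{R}$.

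First I would verify carefully that $([0,1],\mathscr{T}_{std},\le)$ with the metric $m$ restricted to $[0,1]^2$ is a strictly quasi-pseudo-metrized $I$-space bounded by $1$. Boundedness is immediate since $0\le x-y\le 1$ for $x,y\in[0,1]$. For the upper/lower topologies, the $m$-ball $M(x,r)=\{y:\max(x-y,0)<r\}=(x-r,+\infty)$ is an open increasing set, and these generate the upper topology $\mathscr{T}^{\sharp}$ of $[0,1]$ (the Scott-like topology whose base consists of half-lines of the form $(a,1]$); the conjugate gives the lower topology analogously. The $I$-space property reduces to the fact that increasing hulls of open intervals are still open in $[0,1]$. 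Finally, $[0,1]$ is a preorder subspace of $\mathbb{R}$: every open increasing subset of $[0,1]$ has the form $U\cap[0,1]$ with $U$ open increasing in $\mathbb{R}$, because open increasing sets of $[0,1]$ are precisely the sets $[0,1]\cap(a,\infty)$.

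Next, I would apply the previous theorem to the countable product of copies of $[0,1]$. The theorem says that if each factor $E_n$ is a strictly quasi-pseudo-metrizable $I$-space, then so is the product $E=\Pi_n E_n$, and moreover an admissible quasi-pseudo-metric is produced explicitly in the proof as $p(x,y)=\sum_{n=1}^{\infty} p_n(x_n,y_n)/2^n$, where $p_n$ is an admissible quasi-pseudo-metric on $E_n$ bounded by $1$. Taking $E_n=[0,1]$ and $p_n=m|_{[0,1]^2}$ for every $n$, this formula collapses to
\[
p(x,y)=\sum_{n=1}^{\infty}\frac{\max(x_n-y_n,0)}{2^n},
\]
which is precisely the quasi-pseudo-metric claimed. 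The product topology and the product order on $H=[0,1]^{\mathbb{N}}$ coincide by construction with the topology and preorder on the product preordered space to which the theorem applies, so $(H,\mathscr{T},\le)$ is strictly quasi-pseudo-metrized by $p$; antisymmetry of the product order follows from the antisymmetry on each factor, so it is in fact an ordered space.

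No step presents a real obstacle: the content has already been done inside the previous theorem. The only point that deserves a careful sentence is checking that $[0,1]$ is a \emph{preorder} subspace of $\mathbb{R}$ (so that Theorem \ref{bqs} applies to transfer strict quasi-pseudo-metrizability from $\mathbb{R}$ to $[0,1]$), and that it remains an $I$-space under the subspace structure; both are straightforward from the explicit description of monotone open sets in a totally ordered set.
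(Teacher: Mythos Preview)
Your proposal is correct and follows essentially the same route as the paper: the paper, in the paragraph immediately preceding the proposition, observes that $\mathbb{R}$ with $m(x,y)=\max(x-y,0)$ is a strict quasi-pseudo-metric $I$-space, that $[0,1]$ is a preorder subspace of $\mathbb{R}$ (hence strictly quasi-pseudo-metrized by the restriction of $m$, via Theorem~\ref{bqs}) and an $I$-space, and then invokes the product theorem to obtain the stated quasi-pseudo-metric on $H$. Your outline reproduces exactly this chain of reductions, including the explicit check that $[0,1]$ is a preorder subspace.
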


%The Hilbert cube $H=[0,1]^{\mathbb{N}}$ can be endowed with the
%product topology and the product order which makes it, according to
%the previous theorem, a strict quasi-pseudo-metric space with
%quasi-pseudo-metric $p(x,y)= \sum_{n=1}^{\infty}
%\textrm{max}(x_n-y_n,0)/2^n$.

\begin{theorem} \label{vgm}
The following conditions are equivalent for a topological ordered
space $(E,\mathscr{T},\le)$
\begin{itemize}
\item[(a)] $(E,\mathscr{T},\le)$ is a second countable completely
regularly ordered space,
\item[(b)] $(E,\mathscr{T},\le)$ is order embeddable in the
ordered Hilbert cube $H$.
\end{itemize}
\end{theorem}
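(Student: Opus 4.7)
The plan is to prove the two implications separately. Direction (b) $\Rightarrow$ (a) follows by heredity: the preceding proposition asserts that the ordered Hilbert cube $H$ is strictly quasi-pseudo-metrizable, so a fortiori quasi-pseudo-metrizable, hence a completely regularly preordered space; moreover $H=[0,1]^{\mathbb{N}}$ is second countable as a countable product of second countable spaces. Second countability is obviously hereditary, and quasi-pseudo-metrizability is hereditary by Theorem \ref{xvc}, so any order subspace of $H$ is a second countable completely regularly preordered space, and remains an ordered space because $H$ is.

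For (a) $\Rightarrow$ (b) I would use the standard evaluation-map construction. First, invoke Lemma \ref{blx} to obtain a countable family $\mathcal{F}=\{f_k\}_{k\ge 1}$ of continuous isotone functions $f_k:E\to[0,1]$ such that $\mathscr{T}$ is the initial topology generated by $\mathcal{F}$ and $x\le y$ if and only if $f_k(x)\le f_k(y)$ for every $k$. Then define $\Phi:E\to H$ by $\Phi(x)=(f_k(x))_{k\ge 1}$. Continuity of $\Phi$ is immediate from the product-topology description of $H$, since the composition with each projection $\pi_k$ is just $f_k$. Isotonicity of each $f_k$ together with property (ii) of Lemma \ref{blx} show that $\Phi$ both preserves and reflects the product order on $H$. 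Combining order reflection with antisymmetry of $\le$ yields injectivity of $\Phi$, and since $\mathscr{T}$ coincides with the initial topology generated by $\mathcal{F}$, the usual argument shows that $\Phi$ is a homeomorphism onto $\Phi(E)$.

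No step should present a real obstacle, since Lemma \ref{blx} does the essential work of producing the countable separating family. The only conceptual point worth underscoring is that it is precisely the antisymmetry of $\le$, available because we are dealing with an ordered rather than merely preordered space, that promotes the order reflection of $\Phi$ into injectivity and so turns the map into a genuine order embedding into $H$.
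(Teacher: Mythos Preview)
Your proposal is correct and follows essentially the same route as the paper's proof: for (b) $\Rightarrow$ (a) you use heredity of second countability and of quasi-pseudo-metrizability (Theorem \ref{xvc}) from the Hilbert cube, and for (a) $\Rightarrow$ (b) you invoke Lemma \ref{blx} and the evaluation map, deriving injectivity from order reflection plus antisymmetry, exactly as the paper does via the embedding lemma. One terminological nit: in (b) $\Rightarrow$ (a) you write ``order subspace,'' but you only need (and only have) an ordinary subspace here; the technical notion of \emph{preorder subspace} in the paper is stronger and not required.
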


\begin{proof}
(b) $\Rightarrow (a)$. Since $H$ is the countable product of
Hausdorff second countable spaces it is second countable
\cite[Theor. 16.2]{willard70}. As $E$ is homeomorphic to a subset
$S$ of $H$ it is second countable. Furthermore, the subspace $S$ is
quasi-pseudo-metrizable because this property is hereditary and
hence it is a completely regularly ordered space. As $E$ is order
homeomorphic with $S$ the thesis follows.

(a) $\Rightarrow (b)$. Let $\mathcal{F}$ be the family of continuous
isotone functions $f_k: E\to [0,1]$ given by lemma \ref{blx}. They
separate points because if it were $f_k(x)=f_k(y)$ for all $k$, then
we would infer from $f_k(x)\le f_k(y)$ for all $k$, $x\le y$, and
from $f_k(y)\le f_k(x)$ for all $k$, $y\le x$, from which it follows
$x=y$. By the embedding lemma \cite[Theor. 8.12]{willard70} the
function $f: E\to H$ whose components are the functions $f_k: E\to
[0,1]$, is an embedding. Actually, it is a preorder embedding
because $x\le y$ if and only if for all $k$, $f_k(x)\le f_k(y)$,
which is equivalent to $f(x)\le f(y)$, as $H$ is endowed with the
product order.

%The map $f: E\to [0,1]^{\mathbb{N}}$ whose components are the
%isotone continuous functions $f_k: E\to [0,1]$ given by lemma
%\ref{blx} is continuous (because the projections $\pi_k\circ f=f_k$
%are continuous)

\end{proof}

At least in the compact case it is possible to infer that the
topological preordered space is strictly quasi-pseudo-metrizable
through the following

\begin{theorem} \label{lro}
Every compact quasi-pseudo-metrized preordered space is a strictly
quasi-pseudo-metrized preordered space (with the same
quasi-pseudo-metric).
\end{theorem}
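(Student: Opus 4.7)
Fix an admissible pair $p,q$, so that $d=p+q$ generates $\mathscr{T}$ and $G(\le)=\{(x,y):p(x,y)=0\}$. The plan is to verify the three ingredients in the definition of strict quasi-pseudo-metrizability for this very pair: semi-closedness, convexity, and the coincidences $\mathscr{P}=\mathscr{T}^{\sharp}$ and $\mathscr{Q}=\mathscr{T}^{\flat}$, where $\mathscr{P},\mathscr{Q}$ are the topologies generated by $p$ and $q$. Semi-closedness comes for free from Proposition \ref{hwx}: $p$ is jointly continuous, so $G(\le)=p^{-1}(0)$ is closed and $E$ is closed preordered, hence semiclosed preordered.

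One inclusion $\mathscr{P}\subset\mathscr{T}^{\sharp}$ also needs no compactness. Each $p$-ball $P(x,r)$ is $\mathscr{T}$-open since $D(x,r)\subset P(x,r)$, and it is increasing: if $y\in P(x,r)$ and $y\le z$ then $p(y,z)=0$ and the triangle inequality gives $p(x,z)\le p(x,y)+p(y,z)<r$. By symmetry, $\mathscr{Q}\subset\mathscr{T}^{\flat}$. The heart of the argument, and the one place compactness enters, is the reverse inclusion $\mathscr{T}^{\sharp}\subset\mathscr{P}$. Given $V\in\mathscr{T}^{\sharp}$ and $x\in V$, I would set $F=E\setminus V$, which is $\mathscr{T}$-closed and hence compact. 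If $F=\emptyset$ there is nothing to prove; otherwise the continuous function $g(y)=p(x,y)$ (continuity by Proposition \ref{hwx}) attains a minimum $\delta$ on $F$. Were $\delta=0$, some $y\in F$ would satisfy $p(x,y)=0$, i.e.\ $x\le y$; since $V$ is increasing and $x\in V$, this would force $y\in V$, contradicting $y\in F$. Hence $\delta>0$ and $P(x,\delta)\subset V$, showing $V\in\mathscr{P}$. The analogous argument with $q$ delivers $\mathscr{T}^{\flat}\subset\mathscr{Q}$.

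Convexity is then a corollary: given $x\in O\in\mathscr{T}$, pick $r>0$ with $D(x,2r)\subset O$; since $p,q\le d$ one has $P(x,r)\cap Q(x,r)\subset D(x,2r)\subset O$, with $P(x,r)$ open increasing and $Q(x,r)$ open decreasing. The only genuine obstacle in the whole argument is the compactness step: one needs that the closed complement of an open increasing neighborhood of $x$ be bounded away from $x$ in the $p$-distance, and compactness is precisely what allows the continuous nonnegative function $p(x,\cdot)$ to attain its infimum on that complement, while the defining equivalence $p(x,y)=0\Leftrightarrow x\le y$ rules out that infimum being zero.
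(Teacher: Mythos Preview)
Your argument is correct and follows the same route as the paper's: the decisive step is that for $V\in\mathscr{T}^{\sharp}$ and $x\in V$, compactness of $E\setminus V$ yields some $\epsilon>0$ with $P(x,\epsilon)\subset V$, and the paper's phrasing via the finite intersection property of the nested closed sets $(E\setminus V)\cap\{y:p(x,y)\le 1/i\}$ is equivalent to your use of the extreme value theorem for $p(x,\cdot)$ on the compact set $E\setminus V$. One small slip: the inclusion $D(x,r)\subset P(x,r)$ only shows that $x$ is interior to $P(x,r)$, not that $P(x,r)$ is $\mathscr{T}$-open; the correct reason, which you already invoke elsewhere, is the continuity of $p(x,\cdot)$ from Proposition~\ref{hwx}.
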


\begin{proof}
We already know that the $p$-balls $P(x,r)=\{y: p(x,y)<r\}$ are
increasing and open because of the continuity of $p$. We have to
prove that every open increasing set is the union of $p$-balls. The
proof for the decreasing case will be analogous. Let $V$ be an open
increasing set and let $x\in V$, we have $V\supset i(x)=\{y:
p(x,y)=0\}=\cap_{i=1}^{\infty} C_i$, $C_i=\{y: p(x,y)\le 1/i\}$,
where $C_i$ are closed sets.  Thus $\emptyset=(M\backslash V)\cap
(\cap_{i=1}^{\infty} C_i)=\cap_{i=1}^{\infty}[(M\backslash V)\cap
C_i]$, but the sets $(M\backslash V)\cap C_i$ are closed, compact
and, if non-empty, they satisfy the finite intersection property
which contradicts the previous empty intersection \cite[Theor. 1,
Chap. 5]{kelley55}. Thus some of them must be empty, that is for
some $i$, $C_i\subset V$, which reads $P(x,1/i)\subset V$.
\end{proof}

\begin{proposition} \label{xsk}
Let $(E,\mathscr{T},\le)$ be a strictly quasi-pseudo-metrizable
preordered space which is  separable then on  $E$ the topologies
$\mathscr{T}$, $\mathscr{T}^\sharp$ and $\mathscr{T}^\flat$ are
 second countable.

\end{proposition}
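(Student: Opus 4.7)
The plan is to reduce everything to the pseudo-metric $d = p + q$ and exploit the fact that density with respect to $d$ controls both $p$-distances and $q$-distances simultaneously. By the second proposition of Section 2, since $(E,\mathscr{T},\le)$ is strictly quasi-pseudo-metrizable it is in particular quasi-pseudo-metrizable, and $\mathscr{T}$ coincides with the topology of the pseudo-metric $d = p+q$. Separability of a pseudo-metric space is equivalent to second countability (\cite[Theor. 16.11]{willard70}), so $\mathscr{T}$ is automatically second countable. The issue is thus to deduce the second countability of the \emph{coarser} topologies $\mathscr{T}^{\sharp}$ and $\mathscr{T}^{\flat}$, which does not follow abstractly from that of $\mathscr{T}$.

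For this, let $D \subset E$ be a countable $d$-dense set, and consider the countable family $\mathcal{B}^{\sharp} = \{ P(x,1/n) : x \in D,\ n \ge 1 \}$. I would show this is a base for $\mathscr{T}^{\sharp}$. Given an open set $V \in \mathscr{T}^{\sharp}$ and a point $y \in V$, by remark \ref{voy} there is $\epsilon > 0$ such that $P(y,\epsilon) \subset V$. Pick $n$ with $2/n < \epsilon$, and then choose $x \in D$ with $d(x,y) < 1/n$. Because $d = p+q$, this single inequality yields both $p(x,y) < 1/n$ and $q(x,y) = p(y,x) < 1/n$. The first gives $y \in P(x,1/n)$. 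For the inclusion $P(x,1/n) \subset P(y,\epsilon)$, if $z$ satisfies $p(x,z) < 1/n$ then by the triangle inequality
\[
p(y,z) \le p(y,x) + p(x,z) < 1/n + 1/n < \epsilon,
\]
so $z \in P(y,\epsilon) \subset V$. The symmetric argument, with $\mathcal{B}^{\flat} = \{ Q(x,1/n) : x \in D,\ n \ge 1\}$, gives a countable base for $\mathscr{T}^{\flat}$.

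The only subtle point, and the one I would highlight as the key step, is precisely the observation that the asymmetry of $p$ is not an obstruction here: $d$-density is strong enough to simultaneously produce small values of $p(x,y)$ and $p(y,x)$, which is exactly what the triangle inequality needs in both directions. Once this is noted, the rest is a routine verification, and the three second-countability statements follow at once.
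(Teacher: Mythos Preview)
Your proof is correct and follows essentially the same approach as the paper's: both use a countable $d$-dense set to build the base $\{P(c,1/n)\}$ for $\mathscr{T}^{\sharp}$, and both hinge on the observation that $d(x,y)<1/n$ forces $p(x,y)<1/n$ and $p(y,x)<1/n$ simultaneously, which is exactly what the triangle inequality needs. The only cosmetic difference is that the paper opens by noting separability passes to the coarser topologies $\mathscr{T}^{\sharp},\mathscr{T}^{\flat}$, but this is not actually used in the argument.
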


\begin{proof}
Separability with respect to one topology implies separability with
respect to any coarser topology thus $E$ is separable with respect
to $\mathscr{T}^\sharp$ and $\mathscr{T}^\flat$. The function
$d=p+q$ is a pseudo-metric compatible with the topology
$\mathscr{T}$. By \cite[Theor. 16.11]{willard70}, separability with
respect to $\mathscr{T}$ implies second countability of
$\mathscr{T}$. Let us prove the second countability of
$\mathscr{T}^\sharp$, the proof for $\mathscr{T}^\flat$ being
similar. Let $\{c_1,c_2, \ldots\}$ be a countable set which is dense
according to $\mathscr{T}$ and define
\[
U_{nm}=\{x: p(c_n,x)< 1/m\}, \ n=1,2,\ldots,\ m=1,2,\ldots
\]
so that $\{U_{nm}:  n=1,2,\ldots,\ m=1,2,\ldots\}$ is countable. We
claim it is a base indeed let $y\in V\in \mathscr{T}^\sharp$. By
remark \ref{voy} there is some $m$ such that $P(y,1/m)\subset V$.
Consider the set $D(y,1/(2m))$. This set is open in the topology
$\mathscr{T}$ thus there is some $n$ such that $d(y,c_n)< 1/(2m)$
which implies $p(y,c_n)<1/(2m)$ and $p(c_n,y)<1/(2m)$. The set
$U_{n\,2m}$ is therefore such that $y\in U_{n\,2m}$ and $U_{n\,2m}
\subset V$ (as $p(y,x)\le p(y,c_n)+p(c_n,x)<1/m$).

\end{proof}

The next result clarifies that the difference between non-strict and
strict quasi-pseudo-metrizable spaces is that both can be identified
with subspaces of the ordered Hilbert cube but the latter types can
also be identified with {\em order} subspaces of the ordered Hilbert
cube.

\begin{theorem} \label{bhz}
The following conditions are equivalent for a topological ordered
space $(E,\mathscr{T},\le)$
\begin{itemize}
\item[(a)] $(E,\mathscr{T},\le)$ is a separable  strictly
quasi-pseudo-metrizable space,
\item[(b)] $(E,\mathscr{T},\le)$ is order embeddable as an order subspace of the
ordered Hilbert cube $H$.
\end{itemize}
\end{theorem}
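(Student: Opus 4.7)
For $(b) \Rightarrow (a)$ I would note that the Hilbert cube $H$ is second countable (hence separable) as a countable product of second countable spaces, and strictly quasi-pseudo-metrizable by the preceding proposition; since strict quasi-pseudo-metrizability is hereditary with respect to order subspaces by Theorem \ref{bqs}, and separability passes to subspaces of second countable spaces, $E$ inherits both properties.

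For $(a) \Rightarrow (b)$ the plan is to construct an explicit embedding. By Proposition \ref{xsk} there is a countable set $\{c_n\}$ dense in $(E,\mathscr{T})$, and the balls $P(c_n,1/m)$ (resp.\ $Q(c_n,1/m)$) form a base for $\mathscr{T}^\sharp$ (resp.\ $\mathscr{T}^\flat$). I would define, for $n,m\ge 1$,
\[
f_{nm}(z)=\max(1-m\,p(c_n,z),0), \qquad \phi_{nm}(z)=\min(m\,q(c_n,z),1),
\]
each a continuous isotone function $E\to[0,1]$: continuity of $p$ and $q$ follows from the Lipschitz estimate of Proposition \ref{hwx}, $f_{nm}$ is isotone because $p(c_n,\cdot)$ is anti-isotone, and $\phi_{nm}$ is isotone because $q(c_n,\cdot)$ is isotone. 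Enumerating these, let $F:E\to H$ be the map whose coordinates are the $f_{nm}$ and $\phi_{nm}$.

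Three things then need checking. First, that $F$ is a topological embedding: the identities $f_{nm}^{-1}((a,1])=P(c_n,(1-a)/m)$ and $\phi_{nm}^{-1}([0,a))=Q(c_n,a/m)$ show that preimages under $F$ of subbasic open upper and lower sets of $H$ generate $\mathscr{T}^\sharp$ and $\mathscr{T}^\flat$, hence $\mathscr{T}=\sup(\mathscr{T}^\sharp,\mathscr{T}^\flat)$. Second, that $F$ is an order embedding: $x\le y$ implies $F(x)\le F(y)$ by isotonicity of all coordinates; conversely, if $x\not\le y$ then $p(x,y)>0$, and picking $c_n$ with $d(x,c_n)<p(x,y)/4$ and an integer $m$ with $1/m$ in the nonempty interval $(p(c_n,x),p(c_n,y))$ gives $f_{nm}(x)>0=f_{nm}(y)$. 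Third, that $F(E)$ is an order subspace of $H$: each basic open upper set $P(c_n,1/m)$ of $E$ equals $F^{-1}(\{z\in H:z_{(nm)}>0\})$, where the latter is open upper in $H$; taking unions shows every open upper set of $E$ is the $F$-preimage of an open upper set of $H$, and the analogous argument with the $\phi_{nm}$ handles open lower sets.

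The main obstacle is the order subspace condition in the third step, which goes beyond the ordinary order embedding of Theorem \ref{vgm}. The key insight is to use isotone coordinates of two kinds—some whose super-level sets witness a base for $\mathscr{T}^\sharp$ and others whose sub-level sets witness a base for $\mathscr{T}^\flat$—which is precisely why the construction has to employ both $p$ and its conjugate $q$ rather than $p$ alone.
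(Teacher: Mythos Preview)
Your argument follows essentially the same route as the paper's: build coordinates from the quasi-pseudo-metric evaluated at a countable dense set, so that super-level sets of one family yield a base for $\mathscr{T}^\sharp$ and sub-level sets of the other yield a base for $\mathscr{T}^\flat$, then use these to witness the order-subspace property. The paper first replaces $p$ by a quasi-pseudo-metric bounded by $1$ and takes the single-index families $f_n(x)=1-p(c_n,x)$, $g_n(x)=p(x,c_n)$; your double index $m$ is simply a substitute for that bounding step.

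One small technical slip in the order-embedding step: a nonempty interval $(p(c_n,x),p(c_n,y))$ need not contain any number of the form $1/m$ with $m$ a positive integer (for instance if $p$ is unbounded and $p(c_n,x)\ge 1$, or if the interval happens to lie in a gap like $(1/3,1/2)$). The bound $d(x,c_n)<p(x,y)/4$ only guarantees $p(c_n,x)<p(x,y)/4$, which can exceed $1$. The fix is immediate: choose $c_n$ with $d(x,c_n)$ small enough that $p(c_n,x)<1/m$ for some $m$ with $1/m<p(x,y)/2<p(c_n,y)$, or simply invoke the bounding lemma and take $p\le 1$ from the start as the paper does.
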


\begin{proof}
(b) $\Rightarrow$ (a). If $E$ can be identified with an order
subspace of the ordered Hilbert cube $H$ then $E$ is a strictly
quasi-pseudo-metrizable since this property is hereditary with
respect to order subspaces. Furthermore, $H$ has a second countable
topology thus the topology of $E$ is second countable and hence
separable.

(a) $\Rightarrow$ (b). Let $p$ be a quasi-pseudo-metric bounded by
$1$ for the strict quasi-pseudo-metrizable space $E$. Let
$\{c_1,c_2, \ldots\}$ be a countable set which is dense according to
$\mathscr{T}$ and define $f_n(x)=1-p(c_n,x)$, $g_n(x)=p(x,c_n)$,
which are both continuous and isotone with value in $[0,1]$ (Prop.
\ref{hwx}) . The countable family of functions $f_n$ is denoted
$\mathcal{F}$ and the countable family of functions $g_n$ is denoted
$\mathcal{G}$. We define $\mathcal{R}=\mathcal{F}\cup \mathcal{G}$.
We are going to prove that (i1) the coarsest topology on $E$ which
makes all the elements of  $\mathcal{F}$ upper semi-continuous is
$\mathscr{T}^\sharp$ and (i2) the coarsest topology on $E$ which
makes all the elements of $\mathcal{G}$ lower semi-continuous
$\mathscr{T}^\flat$. From that it follows, by convexity of $E$, that
(i) the initial topology for $\mathcal{R}$ is $\mathscr{T}$. We are
also going to prove that (ii) $x\le y$ iff for every $r\in
\mathcal{R}$, $r(x)\le r(y)$.

Let $\mathcal{U}$ be the  coarsest topology on $E$ which makes all
the elements of  $\mathcal{F}$ upper semi-continuous. $\mathcal{U}$
admits as subbase the sets of the form $f_n^{-1}((a,1])$ with $a\in
[0,1]$ which are open and increasing thus all the sets of
$\mathcal{U}$ are open and increasing, that is, $\mathcal{U}\subset
\mathscr{T}^\sharp$.

For the converse, let $V$ be open increasing and let $x\in V$. There
is some $\epsilon>0$ such that $P(x,\epsilon)\subset V$, and there
is some $c_i\in D(x,\epsilon/2)$. The $p$-ball $P(c_i,\epsilon/2)$
contains $x$ because $p(c_i,x)\le d(c_i,x)=d(x,c_i)<\epsilon/2$, and
$P(c_i,\epsilon/2)\subset P(x,\epsilon)\subset V$ because if $z\in
P(c_i,\epsilon/2)$ we have $p(x,z)\le p(x,c_i)+p(c_i,z)\le
d(x,c_i)+p(c_i,z)<\epsilon/2+\epsilon/2=\epsilon$ thus $z\in
P(x,\epsilon)$. These observations imply that the function $f_i$ is
such that $x\in f_i^{-1}((1-\epsilon/2,1])\subset V$ which proves
that $\mathcal{U}$ is as fine as $\mathscr{T}^\sharp$ and hence
equal to it. Actually, it proves more, namely that the sets of the
form $f_n^{-1}((a,1])$ with $a\in [0,1]$ form a base for
$\mathcal{U}$ and hence for $\mathscr{T}^\sharp$. The proof of (i2)
is analogous.

As for (ii), if $x\le y$ we get $r(x)\le r(y)$ because all the
elements of $\mathcal{R}$ are isotone. If $x\nleq y$ then $x\in
E\backslash d(y)$ which is open increasing thus, by the just proved
result, there is some $f_j\in \mathcal{F}$ and $c\in  [0,1]$ such
that $x\in f^{-1}_j((c,1])\subset E\backslash d(y)$ which implies
$f_j(x)>c>f_j(y)$, thus setting $r=f_j$, $r(x)\nleq r(y)$.

The collection $\mathcal{R}$ separates points because if $r(x)=r(y)$
for all $r\in \mathcal{R}$, then, by the just proved result, $x\le
y$ and $y\le x$ which implies by the order assumption, $x=y$. By the
embedding theorem \cite[Theor. 8.12]{willard70}, the map $\rho: E\to
H$ whose components are $r_{2i}=f_i$, $r_{2i+1}=g_i$, that is the
functions of $\mathcal{R}$, is an embedding, and an order embedding
because of (ii).

Let us prove that $\rho(E)$ is not only a subspace but in fact an
order subspace of $H$. For simplicity we shall identify $E$ with
$\rho(E)$ thus we shall omit the order homeomorphism between the two
spaces. Let $V$ be an open increasing subset of $E$, we have to find
an open increasing subset $V'\subset H$, such that $V'\cap E=V$. For
every $x\in V$ there is some $r_{2i}\in \mathcal{F} \subset
\mathcal{R}$, $r_{2i}=f_i$, and $c\ge 0$ such that $x\in
f_i^{-1}((c,1])\subset V$. The open set $V'_x$ on $H$ given by the
product of all intervals [0,1] but for the (2i)-th term which is
$(c,1]$, is open increasing and such that $V'_x\cap E=
f_i^{-1}((c,1])\subset V$. Defined $V'=\bigcup_{x\in V} V'_x$ we
have $V'\cap E=V$, which is the thesis. The proof in the decreasing
case is analogous.

\end{proof}

\begin{remark}
In the ordered case theorem \ref{lro} follows also from theorem
\ref{bhz}. Indeed, suppose that $(E,\mathscr{T},\le)$ is a compact
quasi-pseudo-metrized ordered space. By the order assumption,
$\mathscr{T}$ is Hausdorff and $d$ is a metric. By compactness and
metrizability $(E,\mathscr{T})$ is separable, thus
$(E,\mathscr{T},\le)$ is a separable quasi-pseudo-metrizable space
which can be regarded as a subset of the ordered Hilbert cube. Since
it is compact  it is an order subspace \cite[Prop. 2.6]{minguzzi11f}
and hence a strictly quasi-pseudo-metrizable space.
\end{remark}

%
%
%\begin{theorem}
%Let $(E,\mathscr{T},\le)$ be a topological preordered space and let
%$S$ be a subspace. If $S$ is open and $E$ is an $I$-space then $S$
%is an $I$-space. If $E$ is a convex $I$-space then $S$ is a convex
%$I$-space.
%\end{theorem}
%
%\begin{proof}
%Let $O\subset S$ be an open set according with the induced topology,
%then there is some open set $O'$ on $E$ such that $O=O'\cap S$. If
%$S$ is open then $O$ is open in $E$ thus $i_S(O)=i(O)\cap S$ is open
%in $S$ since $E$ is an $I$-space. Analogously, $d(O)$ is open in $S$
%which proves the first statement.
%
%
%For the second statement let $x\in O=O'\cap S$ where $O'$ is open in
%$S$, by convexity of $E$ there are $U'$ open decreasing and $V'$
%open increasing such that $x\in U'\cap V'\cap S\subset O$. The set
%$Q=U'\cap V'\cap S$ is open on $S$ and $i_S(Q)\subset i(U'\cap
%V')\cap S$. If we prove the converse inclusion then we prove that
%$i_S(Q)$ is open in $S$. Let $z\in i(w)\cap S$ where $w\in U'\cap
%V'$ then $z\in V'\cap S$
%
%
%. If $w\in S$ we have finished because $z\in i_S(Q)$. If there is no
%such $w$, $d(i(U'\cap V')\cap S)\cap U'\cap V'\cap S=\emptyset $.
%
%
%
%If $w\in U'\cap V' \backslash S$
%
%
%\end{proof}

\section{Conclusions}
In the framework of topological preordered spaces we have proved
that the family of the second countable completely regularly
preordered spaces coincides with the family of separable
quasi-pseudo-metrizable spaces (Theor. \ref{bhs}). The theorem is
optimal as there are counterexamples if the latter family is
narrowed to the strictly quasi-pseudo-metrizable spaces or the
former family is enlarged to include the second countable regularly
preordered spaces. We have also shown that in the ordered case the
second countable completely regularly ordered spaces are,
essentially, subspaces of the ordered Hilbert cube (Theor.
\ref{vgm}). The difference with the strict case comes from the fact
that with the strict condition the subspace can be chosen to be an
order subspace (Theor. \ref{bhz}).

It remains open the problem of establishing conditions which,
starting from second countability and the assumption that $E$ is a
completely regularly preordered space could allow one to prove that
$E$  is strictly quasi-pseudo-metrizable. We have shown that a
compactness condition would be enough  (Theor. \ref{lro}) but this
assumption is quite strong for applications.

Another direction for further investigation is that of the
generalization of the Nagata-Smirnov-Bing metrization theorems to
the topological preordered space case. Unfortunately, it seems that
several arguments cannot be generalized and analogous
quasi-pseudo-metrization results could not hold or could be much
harder to prove.

%One may ask why using this fact it is not possible to prove that
%these spaces are also strictly quasi-pseudo-metrizable. The reason
%lies in the fact that the property of being strictly
%quasi-pseudo-metrizable is not hereditary if subspaces are defined
%as in the topological preordered space category, while it is
%hereditary provided subspaces are defined as done in the
%bitopological space category. This is also the reason why in
%bitopology one is lead to the problem of strict
%quasi-pseudo-metrizazion while in the topological preordered space
%framework one is naturally lead to study quasi-pseudo-metrization.
%In any case the property of being strictly quasi-pseudo-metrizable
%is hereditary with respect to {\em order subspaces}
%\cite{priestley72}. Thus, in order to prove the strict
%quasi-pseudo-metrizability of $E$ one would have to add conditions
%that guarantee that the image of $E$ into the ordered Hilbert cube
%is an order subspace.

\section*{Acknowledgments}
I warmly thank P. Pageault for some useful suggestions. This work
has been partially supported by ``Gruppo Nazionale per la Fisica
Matematica''  (GNFM) of ``Instituto Nazionale di Alta Matematica''
(INDAM).

\section*{Appendix: Quasi-uniformities adapted to uniformities and preorders}

A classical problem \cite{nachbin65} asks to establish, given a
uniformity $\mathcal{O}$ on a preordered space $(E,\le)$, if there
is some quasi-uniformity $\mathcal{U}$ such that
$\mathcal{U}^*=\mathcal{O}$ and $\bigcap \mathcal{U}=G(\le)$. In
this appendix we provide a result which is connected to this problem
as well as with the problem of quasi-pseudo-metrization of a
topological preordered space.

The canonical quasi-uniformity $\mathcal{R}$ on $\mathbb{R}$ is that
generated by the quasi-pseudo-metric $m(x,y)=\max(0,x-y)$. The dual
quasi-uniformity $\mathcal{R}^{-1}$ is generated by the
quasi-pseudo-metric $n(x,y)=\max(0,y-x)$ and $\mathcal{R}^*$  is
generated by the metric  $m+n=\vert x-y\vert$. Given a family of
functions on a topological  space $E$ with values in $\mathbb{R}$
one can induce both a weak uniformity or a weak quasi-uniformity on
$E$ depending as to whether one endow $\mathbb{R}$ with
$\mathcal{R}^*$ or $\mathcal{R}$.

\begin{theorem}
Let $(E,\le)$ be a preordered space, let $\mathcal{O}$ be a
uniformity on $E$,  and let $\mathcal{F}$ be a family of uniformly
continuous functions with value in $\mathbb{R}$ with the properties
\begin{itemize}
\item[(i)] $\mathscr{O}$ coincides with the weak uniformity generated
by the set of functions $\mathcal{F}$,
\item[(ii)] $x\le y$ if and only if for every
  $f\in \mathcal{F}$,  $f(x)\le f(y)$,
\end{itemize}
then the weak quasi-uniformity $\mathcal{U}$ generated by
$\mathcal{F}$ is such that  $\mathcal{U}^{*}=\mathcal{O}$, $\bigcap
\mathcal{U}=G(\le)$, and the functions in $\mathcal{F}$ become
quasi-uniformly continuous with respect to $\mathcal{U}$.

If $\mathcal{F}$ is countable then $\mathcal{U}$ admits  a countable
base, thus $(E,\mathcal{U})$ is quasi-pseudo-metrizable (see theorem
\ref{nac}). If $\mathcal{O}$ and $\mathcal{F}$ satisfy (i) and (ii),
$\mathcal{O}$ admits a countable base, and
$\mathscr{T}(\mathcal{O})$ is second countable (equivalently,
$\mathcal{O}$ is induced by a pseudo-metric which makes $E$ a
separable pseudo-metric space) then there is a subfamily
$\mathcal{F}'\subset \mathcal{F}$ which is countable and satisfies
(i) and (ii).

\end{theorem}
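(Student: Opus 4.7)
The plan is to split the statement into its three assertions and treat them in order, the third being the substantive one. For the first, a natural subbase for $\mathcal{U}$ consists of the entourages $V_{f,\epsilon}=\{(x,y):f(x)-f(y)<\epsilon\}$, $f\in\mathcal{F}$, $\epsilon>0$, obtained by pulling back the canonical quasi-uniformity $\mathcal{R}$ along $f$. Since $V_{f,\epsilon}\cap V_{f,\epsilon}^{-1}=\{(x,y):|f(x)-f(y)|<\epsilon\}$ is a subbasic entourage for the weak uniformity generated by $\mathcal{F}$ (with $\mathbb{R}$ carrying $\mathcal{R}^{*}$), the identity $\mathcal{U}^{*}=\mathcal{O}$ follows from (i). For $\bigcap\mathcal{U}=G(\le)$, a pair $(x,y)$ lies in every $V_{f,\epsilon}$ iff $f(x)\le f(y)$ for all $f\in\mathcal{F}$, which by (ii) is equivalent to $x\le y$; quasi-uniform continuity of each $f\in\mathcal{F}$ is witnessed by $V_{f,\epsilon}$ itself. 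For the second assertion, a countable $\mathcal{F}$ makes $\{V_{f,1/n}:f\in\mathcal{F},\,n\ge 1\}$ a countable subbase whose finite intersections form a countable base for $\mathcal{U}$, and Theorem \ref{nac} then yields quasi-pseudo-metrizability.

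For the third and main assertion I would build $\mathcal{F}'$ as the union of two countable pieces, one that secures (i) and one that secures (ii). To secure (i), let $\{O_n\}_{n\ge 1}$ be a countable base for $\mathcal{O}$. Since $\mathcal{O}$ is the weak uniformity of $\mathcal{F}$, each $O_n$ contains a basic entourage $\bigcap_{f\in\mathcal{F}_n}\{(x,y):|f(x)-f(y)|<\epsilon_n\}$ with $\mathcal{F}_n\subset\mathcal{F}$ finite and $\epsilon_n>0$, and setting $\mathcal{F}_2=\bigcup_n\mathcal{F}_n$ yields a countable subfamily whose weak uniformity is sandwiched between $\mathcal{O}$ (as it contains each $O_n$ up to a subbasic entourage) and $\mathcal{O}$ (as $\mathcal{F}_2\subset\mathcal{F}$), hence equals $\mathcal{O}$. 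To secure (ii), observe that $U_f=\{(x,y):f(x)\le f(y)\}$ is closed in the product topology on $E\times E$ (since $f$ is continuous and the graph of $\le$ on $\mathbb{R}$ is closed), so $G(\le)=\bigcap_{f\in\mathcal{F}}U_f$ is an intersection of closed sets in a second countable space; by hereditary Lindel\"ofness this reduces to a countable subintersection $G(\le)=\bigcap_{f\in\mathcal{F}_1}U_f$ for some countable $\mathcal{F}_1\subset\mathcal{F}$, exactly as in the analogous step of the proof of Theorem \ref{bhs}.

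Taking $\mathcal{F}'=\mathcal{F}_1\cup\mathcal{F}_2$, which is countable and contained in $\mathcal{F}$, both conditions survive the union because each is preserved under enlargement inside $\mathcal{F}$: the weak uniformity of $\mathcal{F}'$ is sandwiched between that of $\mathcal{F}_2$ and that of $\mathcal{F}$, both equal to $\mathcal{O}$, while $\bigcap_{f\in\mathcal{F}'}U_f$ is sandwiched between $\bigcap_{f\in\mathcal{F}}U_f$ and $\bigcap_{f\in\mathcal{F}_1}U_f$, both equal to $G(\le)$. The only delicate ingredient is the hereditarily Lindel\"of reduction used to extract $\mathcal{F}_1$; this is the one step that essentially requires second countability of $\mathscr{T}(\mathcal{O})$ beyond the mere countable base of $\mathcal{O}$, and it is precisely the same trick already deployed in Theorem \ref{bhs}.
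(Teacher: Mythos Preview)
Your proposal is correct and follows essentially the same route as the paper: the same subbase analysis for $\mathcal{U}^{*}=\mathcal{O}$ and $\bigcap\mathcal{U}=G(\le)$, the same countable-subbase argument when $\mathcal{F}$ is countable, and for the third assertion the same two-piece construction of $\mathcal{F}'$ (one countable subfamily forced by the countable base of $\mathcal{O}$ to recover (i), the other extracted via the hereditarily Lindel\"of reduction of the closed intersection $G(\le)=\bigcap_{f}U_f$ in the second countable product $E\times E$ to recover (ii)). The only slight terseness is that your deduction of $\mathcal{U}^{*}=\mathcal{O}$ displays only the inclusion $\mathcal{O}\subset\mathcal{U}^{*}$; the reverse follows because each $V_{f,\epsilon}$ is a superset of the $\mathcal{O}$-entourage $\{(x,y):|f(x)-f(y)|<\epsilon\}$, so $\mathcal{U}\subset\mathcal{O}$ and hence $\mathcal{U}^{*}\subset\mathcal{O}$.
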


\begin{proof}
The weak uniformity $\mathcal{O}$ generated by $\mathcal{F}$ admits
a subbase made of subsets of $E\times E$ of the form $(f\times
f)^{-1} R$ where $f\in \mathcal{F}$ and $R\in \mathcal{R}^{*}$
(i.e.\ a base is made by the finite intersections of sets of that
form). For each $R\in \mathcal{R}^{*}$ there are $U,V\in
\mathcal{R}$ such that $U\cap V^{-1}\subset R$ (note that $U\cap
V^{-1}\in \mathcal{R}^*$ by definition of the latter family) thus
$\mathcal{O}$ admits a subbase made of subsets of $E\times E$ of the
form $(f\times f)^{-1} (U\cap V^{-1})=[(f\times f)^{-1}U]\cap
[(f\times f)^{-1}V^{-1}]=[(f\times f)^{-1}U]\cap [(f\times
f)^{-1}V]^{-1}$.

The weak quasi-uniformity $\mathcal{U}$ generated by $\mathcal{F}$
admits a subbase made of subsets of $E\times E$ of the form
$(f\times f)^{-1} U$ where $f\in \mathcal{F}$ and $U\in \mathcal{R}$
(i.e.\ a base is made by the finite intersections of sets of that
form). A subbase for the  quasi-uniformity $\mathcal{U}^{*}$ is then
given by subsets of $E\times E$ of the form $[(f\times f)^{-1}U]\cap
[(f\times f)^{-1}V]^{-1}$ for $U,V\in \mathcal{R}$. We conclude that
$\mathcal{U}^{*}=\mathcal{O}$. Finally,
\begin{align*}
\bigcap \mathcal{U}&=\bigcap_{f\in \mathcal{F}} \bigcap_{U\in
\mathcal{R}} (f\times f)^{-1} U=\bigcap_{f\in \mathcal{F}} (f\times
f)^{-1} \bigcap_{U\in \mathcal{R}} U \\
&= \bigcap_{f\in \mathcal{F}}
(f\times f)^{-1} G(\le_{\mathbb{R}})=\bigcap_{f\in \mathcal{F}}
\{(x,y): f(x)\le f(y)\}=G(\le).
\end{align*}
The functions in $\mathcal{F}$ are quasi-uniformly continuous with
respect to $\mathcal{U}$ by definition of weak quasi-uniformity.

If $\mathcal{F}$ is countable there is  a subbase for $\mathcal{U}$
given by $(f_i\times f_i)^{-1} U_{m}$, where $f_i\in \mathcal{F}$
and $U_{m}=\{(x,y)\in \mathbb{R}\times \mathbb{R}: x-y <1/m\}$.
Since the subbase is countable the base obtained from all the
possible finite intersections is also countable.

If $\mathcal{O}$ admits a countable base then it comes from a
pseudo-metric $d$ (e.g.\ \cite[Theor. 13, Chap. 6]{kelley55}). For a
topological pseudo-metrizable space second countability is
equivalent to separability \cite[Theor. 11, Chap. 4]{kelley55} thus
(i) $\mathcal{O}$ admits a countable base and $\mathscr{T}$ is
second countable, is equivalent to (ii) $\mathcal{O}$ comes from a
pseudo-metric $d$ such that $(E,d)$ is a separable pseudo-metric
space.

Suppose $\mathcal{O}$ has a countable base $O_n$ and that
$\mathscr{T}(\mathcal{O})$ is second countable, then for each $n$ we
can find some integers $k_n,m\ge 1$, and  some functions $f^{(n)}_1,
f^{(n)}_2, \cdots, f^{(n)}_{k_n}\in \mathcal{F}$, such that
$\bigcap_{i=1}^{k_n} (f_i^{(n)}\times f_i^{(n)})^{-1} R_{m}\subset
O_n$, where $R_{m}=\{(x,y)\in \mathbb{R}\times \mathbb{R}: \vert
x-y\vert<1/m\}$. Consider the family $\mathcal{F}'$  which includes
the functions $f^{(n)}_i$ so selected plus another countable family
of uniformly continuous functions which we shall define in a moment.
We have that the weak uniformity it generates is still
$\mathcal{O}$.

Since $\mathscr{T}(\mathcal{O})$ is second countable, the product
topology $\mathscr{T}\times \mathscr{T}$ on $E\times E$ is second
countable. Since the functions belonging to $\mathcal{F}$ are
continuous and $G(\le_{\mathbb{R}})$ is closed, each set $(f\times
f)^{-1}G(\le_{\mathbb{R}})$ for $f\in \mathcal{F}$, is closed in the
product topology of $E\times E$. By second countability of $E\times
E$, the intersection $G(\le)=\bigcap_{f\in \mathcal{F}} (f\times
f)^{-1}G(\le_{\mathbb{R}})$ can be reduced to the intersection of a
countable number of terms and we include the corresponding elements
of $\mathcal{F}$ into $\mathcal{F}'$. The family $\mathcal{F}'$ is
then countable and satisfies (i) and (ii).

\end{proof}

\end{document}